\newtheorem{theorem}{Theorem}[section] 
\newtheorem{corollary}{Corollary}[section]
\newtheorem{remark}{Remark}[section]
\newtheorem{proposition}{Proposition}[section]
\newcommand{\E}{{\rm E}}
\newcommand{\Prob}{{\rm P}}
\newenvironment{proof}{\begin{trivlist}
\item[\hspace{\labelsep}{\bf\noindent Proof. }]}
{$\hfill\Box$\end{trivlist}}
\newcommand{\barF}{\overline{F}}
\title{\huge\bf A generalized telegraph process\\
 with velocity driven by random trials\footnote{First published in 
 {\em Advances in Applied Probability}, Vol.\  45, No.\ 4, p.\ 1111-1136 \textcircled{c} 2013 by The Applied Probability Trust}
 }
\author{Irene Crimaldi\footnote{IMT Institute for Advanced Studies,  
Piazza San Ponziano 6, I-55100 Lucca, Italy, 
{irene.crimaldi@imtlucca.it}},
{Antonio  Di Crescenzo}, 
{Antonella Iuliano}, 
{Barbara Martinucci}\footnote{Dipartimento di Matematica, Universit\`a di Salerno,
Via Ponte don Melillo, Fisciano (SA), I-84084, Italy,
\{adicrescenzo, aiuliano, bmartinucci\}@unisa.it}
}
\begin{document}
%----------------------------------------------------------------------------
\maketitle
        
\begin{abstract}
We consider a random trial-based telegraph process, which describes a motion on 
the real line with two constant velocities along opposite directions. At each epoch of 
the underlying counting process the new velocity is determined by the outcome of a 
random trial. Two schemes are taken into account: Bernoulli trials and classical P\'olya 
urn trials. We investigate the probability law of the process and the mean of the velocity 
of the moving particle. We finally discuss two cases of interest: (i) the case of Bernoulli 
trials and intertimes having exponential distributions with linear rates (in which, interestingly, 
the process exhibits a logistic stationary density with non-zero mean), and (ii) the case of 
P\'olya trials and intertimes having first Gamma and then exponential distributions with 
constant rates.
  
\medskip\noindent
{\bf Keywords} \ Telegraph process; random intertimes; random velocities; 
Bernoulli scheme; P\'olya urn model; logistic stationary density 

\medskip\noindent
{\bf Mathematics Subject Classification} \ 
Primary 
60K15;   % Markov renewal processes, semi-Markov processes
Secondary 
60K37.  % Processes in random environments
 
\medskip\noindent
{\bf Running title} \ Telegraph process driven by random trials.

\end{abstract}
%
%=========================================================================
\section{Introduction} 
%========================================================================
Since the 1950s several authors investigated the (integrated) telegraph 
process as a realistic model of random motion (see Goldstein
\cite{Go51}, Bartlett \cite{Ba57} and Kac \cite{Kac74}).  Such process 
describes a motion of a particle on the real line characterized by constant 
speed, the direction being reversed at the random epochs of a Poisson
process.  The probability density of the particle's position satisfies
a hyperbolic differential equation, whose probabilistic properties
have been studied for instance by Orsingher \cite{Ors1990},
\cite{Ors1995}, Foong and Kanno \cite{FoKa94}, and more recently by
Beghin {\em et al.\/} \cite{BeNiOr01}. See also the book of Pinsky \cite{Pi1991}, 
in which many results on the telegraph process and its generalizations  
were given, and many applications were discussed. Certain one-dimensional 
generalizations of the telegraph process focus on cases in which the 
intertimes between two consecutive changes of direction are characterized 
by a variety of distributions.  
We recall the case of Erlang distribution (Di Crescenzo \cite{DiCr2001}), 
of gamma distribution (Di Crescenzo and Martinucci \cite{DiCrMa2007}), 
of the exponential distribution with linearly increasing rate 
(Di Crescenzo and Martinucci \cite{DiCrMa2010}). Moreover, a generalized telegraph 
process governed by an alternating renewal process was studied by Zacks \cite{Za2004}, 
whereas Iacus \cite{Ia2001} gave a rare example where an explicit probability law is 
obtained for an inhomogeneous telegraph process.  
\par
In this paper we aim to study the telegraph model subject to a further source of 
randomness, by assuming that the velocity of the moving particle is driven by random 
trials. Precisely, we deal with a two-velocity random motion on the real line where, 
differently from the classical telegraph process whose positive and negative 
velocities are alternating, at each time epoch now the new velocity is determined 
by the outcome of a random trial. The latter follows one out of two schemes: 
the {\em Bernoulli scheme\/}, which acts with independence, and the classical 
{\em P\'olya scheme\/} (cf.\ P\'olya \cite{Po31}, Mahmoud \cite{Ma2008}), 
where the outcome of each trial depends on the outcomes of the previous ones.   
We remark that the inclusion of random trials in finite-velocity random evolutions  
allows to describe some real situations of interest, such as the motions of particles 
subject to collisions, whose effects may produce direction changes. 
\par
Some novelties with respect to various finite-velocity random motions are 
mentioned in Section 2, with special reference to models with random velocities. 
We recall that investigations on the telegraph process with random velocities 
have been performed recently by Stadje and Zacks \cite{StZa2004}, and by 
De Gregorio \cite{DeGr2010}. Moreover, some recent contributions on multidimensional 
motions characterized by finite speed and randomly distributed directions are given in 
De Gregorio \cite{DeGr2012}, and De Gregorio and Orsingher \cite{DeGrOrs2012}. 
\par
The paper is organized as follows. In Section \ref{model} we describe in detail the 
mathematical model of the motion. In Section \ref{gen-results}, we investigate the
probability law and the conditional mean velocity of the process. Then, in Section 
\ref{bernoulli-case}, for the Bernoulli scheme we discuss the instance in which the 
random intertimes between consecutive trials are exponentially distributed with 
linearly increasing rates. In this case we obtain the probability density of the process 
in closed form, we show that it possesses a logistic stationary density, and then express 
the conditional mean velocity of the process in terms of hypergeometric functions.  
Finally, in Section \ref{polya-case}, we show a case for the P\'olya scheme in which 
the first random intertimes in both directions are Gamma-distributed, whereas all 
remaining intertimes are exponentially distributed. We obtain the probability density 
in closed form and the conditional mean velocity as series of Gauss hypergeometric
functions. For the reader's convenience, the paper is enriched by two Appendices 
containing definitions and formulas used in the proofs.
%==========================================================================
\section{Stochastic model}\label{model}
%===========================================================================
Let $\left\{(S_{t},V_{t});t\geq0\right\}$ be a continuous-time
stochastic process, where $S_{t}$ and $V_{t}$ denote respectively
position and velocity at time $t$ of the moving particle.  The motion
is characterized by two velocities, $c$ and $-v$, with $c,v>0$, the
direction of the motion being specified by the sign of the
velocity. At time $T_{0}=0$ the particle starts from the origin, thus
$S_{0}=0$. The initial velocity $V_{0}$ is determined by the outcome
$X_1$ of the first random trial. At the random time $T_{1}>0$ the
particle is subject to an event, whose effect possibly changes the
velocity according to the outcome $X_2$ of the second random trial.  This
behavior is repeated cyclically at every instant of a sequence of
random epochs $T_0=0<T_{1}<T_{2}<T_{3}<\cdots$. We assume that the
durations of time intervals $[T_{n},T_{n+1})$, $n=0,1,2,\ldots$, 
constitute a sequence of non-negative random variables. Precisely, 
let $U_{k}$ (resp.\ $D_{k}$) denote the random duration of the 
$k$-th time period during which the particle moves forward, with 
velocity $c$ (resp.\ backward, with velocity $-v$). Furthermore, 
$\{U_{k};k\geq 1\}$ and $\{D_{k};k\geq 1\}$ are mutually independent 
sequences of non-negative and absolutely continuous independent 
random variables. Denoting by $Z_{n}$ the velocity of the particle 
during the interval $[T_{n},T_{n+1})$, we assume that $\{Z_{n}; n\geq 0\}$ 
is a sequence of random variables governed by the sequence 
$\{X_n; n\geq 1\}$ of the outcomes of the random trials. Moreover we 
assume that the collection $\{U_k,D_k; n\geq 1\}$ is independent 
of $\{X_n;n\geq 1\}$.
\par In this paper we focus on the case in which the random trials
$\{X_n; n\geq 1\}$ are the outcomes of a sequence of indicator
functions such that
\begin{equation}\label{eq:V0}
  \Prob\{Z_{0}=c\}=\Prob\{X_1=1\}=\frac{b}{b+r}, 
\qquad \Prob\{Z_{0}=-v\}=\Prob\{X_1=0\}=\frac{r}{b+r}  
\end{equation}
and for $n\geq 1$
\begin{equation}\label{eq:Vn}
\begin{split}
\Prob\{Z_{n}=c\,|\,{\cal G}_n\}&=\Prob\{X_{n+1}=1\,|\,{\cal G}_n\}=
\frac{b+A\sum_{k=1}^nX_k}{b+r+An},\\ 
\Prob\{Z_{n}=-v\,|\,{\cal G}_n\}&=\Prob\{X_{n+1}=0\,|\,{\cal G}_n\}=
\frac{r+A\sum_{k=1}^n(1-X_k)}{b+r+An}, 
 \end{split}
\end{equation}
where $b$ and $r$ are positive constants, $A$ is a non-negative
constant, ${\cal G}_0=\{\emptyset,\Omega\}$ and ${\cal
  G}_n=\sigma(X_1,\dots,X_n)$ for $n\geq 1$.
\par We can distinguish two different cases.
\begin{itemize}
\item[(i)] The case $A=0$, which means that the random trials
  $\{X_n;n\geq 1\}$ are independent, i.e. they are a {\em Bernoulli scheme}
  with parameter 
\begin{equation}
 p=\frac{b}{b+r}.
 \label{eq:def_p}
\end{equation}
\item[(ii)] The case $A>0$, which means that $\{X_n;n\geq 1\}$ are the
  outcomes of a sequence of drawings from an urn which initially has
  $b$ black balls and $r$ red balls and then is updated according to
  the {\em classical P\'olya urn scheme}: the drawn ball is returned
  into the urn together with $A>0$ balls of the same
  color.\footnote{We recall that, from a mathematical point of
      view, also in this case the parameters $b,r,A$ of the model can
    be {\em real} (not necessarily integer).}  
($X_1=1$ means that the drawn ball is black; otherwise we have $X_1=0$.) 
In this case the random trials $\{X_n;n\geq 1\}$ are not independent, but only exchangeable 
(see Aldous \cite{Al85}). We recall that urn schemes are used in many applications in order to 
model the so-called {\em preferential attachment principle}, which is a key feature governing the 
dynamics of many economic, social and biological systems. It can be formulated as follows: 
the greater the number of times we observed a certain event, the higher is the probability 
of occurrence of that event at next time. 
\par In Section \ref{sect:reinforced} we discuss an extension of the
above setting to the case in which $\{X_n; n\geq 1\}$ is the sequence
of the outcomes of drawings from a {\em randomly reinforced urn}.
\end{itemize}
We notice that Stadje and Zacks \cite{StZa2004} studied a telegraph process with random velocities, 
where at each epoch of a homogeneous Poisson process the new velocity of the motion is chosen 
according to a common density, independently of the previous velocities and of the Poisson process. 
Hence, in particular the velocities are a sequence of i.i.d.\ random variables and, in Section 5 
of \cite{StZa2004}, the first-exit time of the process through a positive constant  is investigated 
in the special case of 2-valued random velocities. This schema corresponds to the case (i) 
described above, where at each random epoch a Bernoulli trial occurs. However, there is a  
significant difference: whereas in \cite{StZa2004} the random durations are independent of the 
random velocities, our model is based on more general assumptions, which involve non-identically 
distributed durations of random intervals $[T_n,T_{n+1})$, depending on the values taken by the 
velocities. Moreover our model includes the case (ii) of not independent random velocities.  
These two facts are also novelties with respect to De Gregorio \cite{DeGr2010} where, as 
in \cite{StZa2004}, a homogeneous Poisson process governs the velocity changes that form 
a sequence of i.i.d.\ random variables, independent of the Poisson process. 
\par 
Let $M_t$ be the stochastic process which counts the number
of epochs $T_{i}$ (with $i\geq 1$) occurring before $t$, i.e.
\begin{equation} 
 M_t=\max\left\{i \geq 1: T_{i}\leq t\right\},
 \qquad t>0.
 \label{eq:defMt}
\end{equation}
It is worthwhile to note that position and velocity of the particle at
time $t$ can thus be formally expressed as:
\begin{eqnarray}
 V_{t}=Z_{M_t}, 
 \qquad 
 S_{t}={\int^{t}_{0}}\,V_{s}\,{\rm d}s, 
 \qquad t>0.
 \label{eq:VP}
\end{eqnarray}
Figure 1 shows two examples of sample paths of $S_t$, with indication
of the random intertimes $U_k$ and $D_k$, where the sequence 
$\{X_n; n\geq 1\}$ takes values {\em (a)} $\{1,1,0,0,1,0,1,\ldots\}$ 
and {\em (b)} $\{0,1,1,0,1,1,\ldots\}$, respectively.  
%
%::::::::::::::::::::::::::::::::::::::::::::::::::::::::::::::::::::::::::::
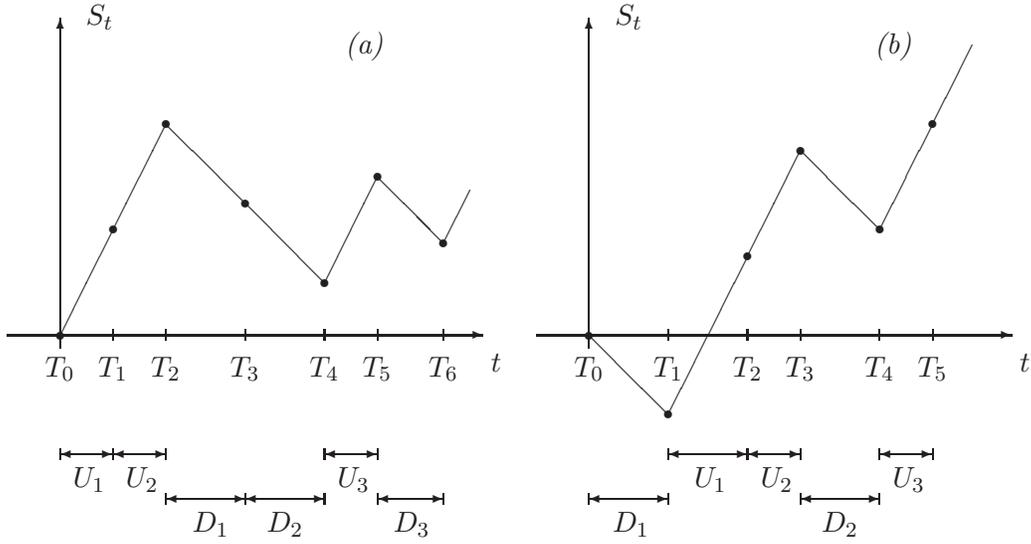
\begin{figure}[t]
\begin{center}
\begin{picture}(381,176) 
\put(20,160){\makebox(30,15)[t]{$S_t$}} 
\put(220,160){\makebox(30,15)[t]{$S_t$}} 
\put(120,150){\makebox(30,15)[t]{{\em (a)}}} 
\put(320,150){\makebox(30,15)[t]{{\em (b)}}} 
\put(0,50){\vector(1,0){180}} 
\put(200,50){\vector(1,0){180}} 
\put(20,45){\vector(0,1){125}} 
\put(220,45){\vector(0,1){125}} 
\put(175,28){\makebox(20,15)[t]{$t$}} 
\put(375,28){\makebox(20,15)[t]{$t$}} 
\put(20,50){\line(1,2){40}} 
\put(60,130){\line(1,-1){60}} 
\put(120,70){\line(1,2){20}} 
\put(140,110){\line(1,-1){25}}
\put(165,85){\line(1,2){10}}
\put(220,50){\line(1,-1){30}} 
\put(250,20){\line(1,2){50}} 
\put(300,120){\line(1,-1){30}} 
\put(330,90){\line(1,2){35}} 
\put(40,48){\line(0,1){4}} 
\put(60,48){\line(0,1){4}} 
\put(90,48){\line(0,1){4}} 
\put(120,48){\line(0,1){4}} 
\put(140,48){\line(0,1){4}} 
\put(165,48){\line(0,1){4}} 
\put(250,48){\line(0,1){4}} 
\put(280,48){\line(0,1){4}} 
\put(300,48){\line(0,1){4}} 
\put(330,48){\line(0,1){4}} 
\put(350,48){\line(0,1){4}} 
\put(20,50){\circle*{3}} 
\put(40,90){\circle*{3}} 
\put(60,130){\circle*{3}} 
\put(90,100){\circle*{3}} 
\put(120,70){\circle*{3}} 
\put(140,110){\circle*{3}} 
\put(165,85){\circle*{3}} 
\put(220,50){\circle*{3}} 
\put(250,20){\circle*{3}} 
\put(280,80){\circle*{3}} 
\put(300,120){\circle*{3}} 
\put(330,90){\circle*{3}} 
\put(350,130){\circle*{3}} 
\put(5,27){\makebox(30,15)[t]{$T_0$}} 
\put(25,27){\makebox(30,15)[t]{$T_1$}} 
\put(45,27){\makebox(30,15)[t]{$T_2$}} 
\put(75,27){\makebox(30,15)[t]{$T_3$}} 
\put(105,27){\makebox(30,15)[t]{$T_4$}} 
\put(125,27){\makebox(30,15)[t]{$T_5$}} 
\put(150,27){\makebox(30,15)[t]{$T_6$}} 
\put(205,27){\makebox(30,15)[t]{$T_0$}} 
\put(235,27){\makebox(30,15)[t]{$T_1$}} 
\put(265,27){\makebox(30,15)[t]{$T_2$}} 
\put(285,27){\makebox(30,15)[t]{$T_3$}} 
\put(315,27){\makebox(30,15)[t]{$T_4$}} 
\put(335,27){\makebox(30,15)[t]{$T_5$}} 
\put(20,7){\line(0,-1){4}} 
\put(40,7){\line(0,-1){4}} 
\put(60,7){\line(0,-1){4}} 
\put(60,-10){\line(0,-1){4}} 
\put(90,-10){\line(0,-1){4}} 
\put(120,-10){\line(0,-1){4}} 
\put(120,7){\line(0,-1){4}} 
\put(140,7){\line(0,-1){4}} 
\put(140,-10){\line(0,-1){4}} 
\put(165,-10){\line(0,-1){4}} 
\put(220,-10){\line(0,-1){4}} 
\put(250,-10){\line(0,-1){4}} 
\put(250,7){\line(0,-1){4}} 
\put(280,7){\line(0,-1){4}} 
\put(300,7){\line(0,-1){4}} 
\put(300,-10){\line(0,-1){4}} 
\put(330,-10){\line(0,-1){4}} 
\put(330,7){\line(0,-1){4}} 
\put(350,7){\line(0,-1){4}} 
\put(20,5){\vector(1,0){20}} 
\put(40,5){\vector(-1,0){20}} 
\put(40,5){\vector(1,0){20}} 
\put(60,5){\vector(-1,0){20}} 
\put(60,-12){\vector(1,0){30}} 
\put(95,-12){\vector(-1,0){35}} 
\put(90,-12){\vector(1,0){30}} 
\put(115,-12){\vector(-1,0){25}} 
\put(120,5){\vector(1,0){20}} 
\put(140,5){\vector(-1,0){20}} 
\put(140,-12){\vector(1,0){25}} 
\put(165,-12){\vector(-1,0){25}} 
\put(220,-12){\vector(1,0){30}} 
\put(250,-12){\vector(-1,0){30}} 
\put(250,5){\vector(1,0){30}} 
\put(280,5){\vector(-1,0){30}} 
\put(280,5){\vector(1,0){20}} 
\put(300,5){\vector(-1,0){20}} 
\put(300,-12){\vector(1,0){30}} 
\put(330,-12){\vector(-1,0){30}} 
\put(330,5){\vector(1,0){20}} 
\put(350,5){\vector(-1,0){20}} 
\put(16,-15){\makebox(30,15)[t]{$U_1$}} 
\put(36,-15){\makebox(30,15)[t]{$U_2$}} 
\put(62,-32){\makebox(30,15)[t]{$D_1$}} 
\put(90,-32){\makebox(30,15)[t]{$D_2$}} 
\put(116,-15){\makebox(30,15)[t]{$U_3$}} 
\put(138,-32){\makebox(30,15)[t]{$D_3$}} 
\put(221,-32){\makebox(30,15)[t]{$D_1$}} 
\put(251,-15){\makebox(30,15)[t]{$U_1$}} 
\put(276,-15){\makebox(30,15)[t]{$U_2$}} 
\put(300,-32){\makebox(30,15)[t]{$D_2$}} 
\put(326,-15){\makebox(30,15)[t]{$U_3$}} 
\end{picture} 
\end{center} 
\vspace{0.6cm}
\caption{Sample paths of $S_t$ with {\em (a)} $V_{0}=c$, and {\em (b)}
  $V_{0}=-v$.}
\end{figure}
%::::::::::::::::::::::::::::::::::::::::::::::::::::::::::::::::::::::::::::
% 
\par 
The following stochastic equation holds:
$$
 S_{T_{k+1}}=S_{T_k}+W_k, \qquad k\geq 0\,,
$$ 
where $\{W_k; k\geq 0\}$ is the sequence of random variables defined by
$$
W_0=\left\{
 \begin{array}{ll}
 c U_1 & \hbox{if }Z_0=c\\
 -v D_1 & \hbox{if }Z_0=-v,
 \end{array}
\right.
$$
and, for $k\geq 1$ and $1\leq j\leq k+1$,
$$
 W_k=\left\{
 \begin{array}{ll}
 c U_j & \hbox{if }Z_k=c  \hbox{ and } X_1+N_{k-1}=j-1\\
 -v D_j & \hbox{if }Z_k=-v \hbox{ and } X_1+N_{k-1}=k-j+1\,,
 \end{array}
 \right.
$$
where $N_{k-1}$ is the random variable that counts the number of random trials 
yielding velocity $c$ among the trials going from the $2$-nd to the $k$-th one, i.e.
\begin{equation} 
 N_{k-1}=\sum_{i = 1}^{k-1}\textbf{1}_{\left\{Z_{i}=c\right\}}
 =\sum_{i=1}^{k-1}X_{i+1} 
 =\sum_{h=2}^k X_h\,,
 \qquad k\geq 2. 
 \label{number-draw}
\end{equation}
For convenience, we set also $N_0=0$.  In Appendix \ref{appendix-N} we
give some useful formulas regarding the probability distribution 
of  $N_{k-1}$ conditioned on the initial velocity for the two different schemes.  
\par 
Throughout the paper we denote by $f_{U_{k}}$ (resp.\ $F_{U_{k}}$,
${\overline{F}}_{U_{k}}$) and by $f_{D_{k}}$ (resp.\ $F_{D_{k}}$,
${\overline{F}}_{D_{k}}$) the probability densities (resp.\ cumulative
distribution functions, tail distribution functions) of $U_{k}$ and $D_{k}$,
respectively.  Moreover, $f^{(k)}_{U}$ and $f^{(k)}_{D}$ (resp., $F^{(k)}_{U}$ and $F^{(k)}_{D}$, 
$\barF^{(k)}_{U}$ and $\barF^{(k)}_{D}$) will denote the probability densities
(resp., cumulative distribution functions, tail distribution functions) of the partial sums
$$
 U^{(k)}=U_{1}+U_{2}+\cdots+U_{k},\qquad D^{(k)}=D_{1}+D_{2}+\cdots+D_{k}, 
 \qquad k\geq 1\,.
$$
%===========================================================================
\section{Probability law and mean velocity}\label{gen-results}
%===========================================================================
At time $t=0$ we assume $S_0=0$ so that at time $t>0$ the particle is
located in the domain $[-vt,ct]$.  The probability law of $S_{t}$,
$t>0$, thus possesses a discrete component on the points $-vt$ and $ct$, 
and an absolutely continuous component over $(-vt,ct)$, which
will be investigated in the sequel. 
\begin{proposition}\label{prop:discr}
For all $t>0$ we have
\begin{equation} \label{Prob1}
\Prob\{S_{t}=ct\} 
 =\frac{b}{b+r}\overline{F}_{U_{1}}(t) + \sum_{k = 1}^{+\infty}
\Prob\{Z_0=c,N_{k}=k\}\left[F_U^{(k)}(t)-F_U^{(k+1)}(t)\right],
\end{equation}
where 
$$
 \Prob\{Z_0=c,N_{k}=k\}=
 \left\{
 \begin{array}{ll}
 p^{k+1} & \hbox{if $A=0$} \\
\displaystyle 
\frac{b}{b+r}\left(\frac{b+A}{A}\right)_k  
\left[\left(\frac{b+A+r}{A}\right)_k\right]^{-1} &  
\hbox{if  $A>0$}.  
\end{array}
\right.
$$
Similarly, we have 
\begin{equation}\label{Prob1-neg}
\Prob\{S_{t}=-vt\}
=\frac{r}{b+r}\overline{F}_{D_{1}}(t)+ \sum_{k = 1}^{+\infty}
 \Prob\{Z_0=-v,N_{k}=0\}
\left[F_D^{(k)}(t)-F_D^{(k+1)}(t)\right],
\end{equation}
where 
$$
 \Prob\{Z_0=-v,N_{k}=0\}=
 \left\{
 \begin{array}{ll}
 (1-p)^{k+1} & \hbox{if $A=0$} \\
\displaystyle 
\frac{r}{b+r}\left(\frac{r+A}{A}\right)_k  
\left[\left(\frac{b+A+r}{A}\right)_k\right]^{-1} &  
\hbox{if  $A>0$}.  
\end{array}
\right.
$$ 
\end{proposition}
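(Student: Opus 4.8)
The plan is to decompose the event $\{S_{t}=ct\}$ according to the number of velocity epochs that have occurred up to time $t$. Since $V_s\le c$ for every $s$, the relation $S_t=\int_0^t V_s\,{\rm d}s=ct$ holds if and only if $V_s=c$ for a.e.\ $s\in[0,t]$, that is, if and only if the particle has never moved backward up to time $t$; equivalently, if exactly $k$ epochs $T_1,\dots,T_k$ fall in $[0,t]$ then one needs $Z_0=Z_1=\cdots=Z_k=c$. First I would treat the case in which no epoch falls in $[0,t]$, i.e.\ $T_1>t$: this forces $Z_0=c$, and conditionally on $Z_0=c$ one has $T_1=U_1$, so by the assumed independence of $\{X_n\}$ from $\{U_k,D_k\}$ this contributes $\Prob\{Z_0=c\}\,\overline{F}_{U_1}(t)=\frac{b}{b+r}\,\overline{F}_{U_1}(t)$, the first term of \eqref{Prob1}.

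Next, for $k\ge1$ the relevant event is $\{Z_0=c,\dots,Z_k=c\}\cap\{T_k\le t<T_{k+1}\}$. In terms of the trials, $\{Z_0=c,Z_1=c,\dots,Z_k=c\}=\{X_1=X_2=\cdots=X_{k+1}=1\}$, which by \eqref{number-draw} is precisely the event $\{Z_0=c,\,N_k=k\}$; and on this event the first $k+1$ time periods are forward ones, so $T_j=U^{(j)}$ for $1\le j\le k+1$, whence $\{T_k\le t<T_{k+1}\}=\{U^{(k)}\le t<U^{(k+1)}\}$, an event depending only on $\{U_k\}$ and hence independent of the trials. Since $U^{(k)}\le U^{(k+1)}$ almost surely, $\Prob\{U^{(k)}\le t<U^{(k+1)}\}=F_U^{(k)}(t)-F_U^{(k+1)}(t)$, so the contribution of the case ``exactly $k$ epochs, all giving velocity $c$'' is $\Prob\{Z_0=c,N_k=k\}\,[F_U^{(k)}(t)-F_U^{(k+1)}(t)]$. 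Adding these contributions over $k\ge0$ (with $U^{(0)}\equiv0$, $N_0=0$, so that the $k=0$ term reproduces the first term of \eqref{Prob1}) gives \eqref{Prob1}; the representation \eqref{Prob1-neg} follows symmetrically, using $V_s\ge-v$, the identity $\{Z_0=-v,\dots,Z_k=-v\}=\{Z_0=-v,\,N_k=0\}$, and the fact that on this event $T_j=D^{(j)}$.

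It remains to evaluate the combinatorial factors. When $A=0$ the trials are i.i.d.\ Bernoulli$(p)$, so $\Prob\{Z_0=c,N_k=k\}=\Prob\{X_1=\cdots=X_{k+1}=1\}=p^{k+1}$ and likewise $\Prob\{Z_0=-v,N_k=0\}=(1-p)^{k+1}$. When $A>0$ the probability of $k+1$ consecutive black draws in the classical P\'olya urn is $\frac{b}{b+r}\prod_{j=1}^{k}\frac{b+jA}{b+r+jA}$; pulling a factor $A$ out of numerator and denominator of each term and writing the two resulting products as rising factorials, $\prod_{j=1}^k(x+j)=(x+1)_k$, yields $\frac{b}{b+r}\left(\frac{b+A}{A}\right)_k\big[\big(\frac{b+A+r}{A}\big)_k\big]^{-1}$, and the analogous computation with red draws gives the formula for $\Prob\{Z_0=-v,N_k=0\}$; these are exactly the distributions recorded in Appendix~\ref{appendix-N}.

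The step I expect to be the crux is the equivalence ``$S_t=ct$ if and only if the particle has not reversed direction up to time $t$'', which rests on $c,v>0$ together with $S_t=\int_0^t V_s\,{\rm d}s$; once this reduction is made the rest is bookkeeping on the epoch times plus the two elementary scheme computations. A minor technical point worth noting is that the absolute continuity of each $U_k$ (and of each $D_k$) makes the boundary event $\{U^{(k+1)}=t\}$ null, so that $\{U^{(k)}\le t\}$ is the disjoint union of $\{U^{(k)}\le t<U^{(k+1)}\}$ and $\{U^{(k+1)}\le t\}$ and the telescoping difference $F_U^{(k)}(t)-F_U^{(k+1)}(t)$ is exact.
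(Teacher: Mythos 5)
Your proof is correct and follows essentially the same route as the paper's: both reduce $\{S_t=ct\}$ to the event that all velocities up to time $t$ equal $c$, decompose by the number $k$ of epochs in $[0,t]$, and use the independence of the trials from the durations to factor the probability into $\Prob\{Z_0=c,N_k=k\}$ times $F_U^{(k)}(t)-F_U^{(k+1)}(t)$. The only cosmetic difference is that you compute the Bernoulli and P\'olya factors directly from the schemes, whereas the paper cites the corresponding formulas from Appendix~\ref{appendix-N} with $j=k-1$.
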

\begin{proof} First of all, note that, for $y\in\{-v,c\}$, condition  
$S_{t}=yt$ implies $V_0=y$ and $V_{t}=y$ a.s.
%  ho scritto almost surely perch\'e, ad esempio, si ha $S_t=ct$ 
%anche quando la 
%  velocit\`a \`e sempre $c$ fino a $t$ escluso (ossia su $[0,t[$) e 
%vale $t=T_{k}$ 
%  e $V_t=-v$. Per\`o l'evento $T_k=t$ ha probabilit\'a zero}}
Conditioning on $M_t=k$ and recalling (\ref{number-draw}) we have
\begin{equation*}
\begin{split}
\Prob\{S_t=ct\}&=\Prob\{S_{t}=ct, V_{t}=c, V_{0}=c\}\\
&=\sum_{k=0}^{+\infty}\Prob\{Z_0=c,N_k=k\}\,\Prob\{M_{t}=k\,|\,Z_0=c,N_k=k\}.
\end{split}
\end{equation*}
Eq.\ ({\ref{Prob1}}) thus easily follows from (\ref{eq:V0}) and from
Eqs.\ (\ref{formula-bern2}) and (\ref{formula-polya2}) for $j=k-1$. In
the same way Eq.\ ({\ref{Prob1-neg}}) follows from (\ref{eq:V0}) and
from Eqs.\ (\ref{formula-bern3-neg}) and (\ref{formula-polya3-neg}).
\end{proof}
\par
Let us now define the
probability density of $S_t$, $t>0$, conditional on initial velocity 
$y \in \left\{-v,c\right\}$:
\begin{equation}
 p(x,t\,|\,y)=\frac{\partial}{\partial x}
 \Prob\{S_{t}\leq x\,|\,V_{0}=y\}, 
 \qquad x \in (-vt,ct). 
 \label{defdensp}
\end{equation}
We remark that, for $t>0$, 
$$
\Prob\{S_{t}=yt\,|\,V_0=y\}+
\int_{-vt}^{ct}p(x,t\,|\,y)\,{\rm d}x=1, 
 \qquad y\in\{-v,c\}.
$$
The density of the particle position is
\begin{equation}
 p(x,t)=\frac{\partial}{\partial x}\Prob\{S_{t}\leq x\}
 =\frac{b}{b+r}\,p(x,t\,|\,c)+\frac{r}{b+r}\,p(x,t\,|\,-v),
 \label{eq:defdensp}
\end{equation}
where $p(x,t\,|\,y)$, defined in (\ref{defdensp}), can be expressed as 
\begin{equation}
 p(x,t\,|\,y)
 =f(x,t\,|\,y)+b(x,t\,|\,y).
 \label{densp}
\end{equation}
Here $f$ and $b$ denote the densities of particle's position when the
motion at time $t$ is characterized by forward and backward velocity,
respectively, i.e.\ 
\begin{eqnarray*}
&& \hspace*{-0.8cm} 
f(x,t\,|\,y)=\frac{\partial}{\partial x}
 \Prob\{S_{t}\leq x, V_{t}=c\,|\, V_{0}=y\}, 
\\
&& \hspace*{-0.8cm}
b(x,t\,|\,y)=
\frac{\partial}{\partial x}
 \Prob\{S_{t}\leq x, V_{t}=-v\,|\, V_{0}=y\}, 
\end{eqnarray*}
for $x$ in $(-vt,ct)$. Then, for $y \in \left\{-v,c\right\}$, we can write
\begin{equation}
 f(x,t\,|\,y)=\sum_{k=1}^{+\infty}f_k(x,t\,|\,y), 
 \qquad 
 b(x,t\,|\,y)=\sum_{k=1}^{+\infty}b_k(x,t\,|\,y)
 \label{density-sum}
\end{equation}
where, for $x$ in $(-vt,ct)$,
\begin{eqnarray}
&& \hspace*{-0.8cm} 
f_k(x,t\,|\,y)=\frac{\partial}{\partial x}
\Prob\{S_{t}\leq x, V_{t}=c, M_t=k\,|\, V_{0}=y\}, 
\label{densf_k}
\\
&& \hspace*{-0.8cm}
b_k(x,t\,|\,y)=
\frac{\partial}{\partial x}
\Prob\{S_{t}\leq x, V_{t}=-v, M_t=k\,|\, V_{0}=y\},
\label{densb_k}
\end{eqnarray}
with $M_t$ defined in (\ref{eq:defMt}). 
It is worthwhile to recall that, in the case of a telegraph process driven by i.i.d.\ random velocities 
and an independent homogeneous Poisson process, a two-dimensional renewal equation for 
$p(x,t)$ is provided in Eq.\ (2.5) of \cite{StZa2004}. However, as pointed out by the authors, 
it is quite difficult to be solved analytically. In our case hereafter we develop a different approach, 
based on suitable conditioning. We first give an expression for densities $f_k$ and $b_k$ 
conditioned by $V_{0}=c$. 
Note that $f_1(x,t\,|\,c)=0$.
\begin{theorem}\label{Theor1}
For $t>0$ and $-vt < x < ct$, the densities (\ref{densf_k}) and
(\ref{densb_k}) can be expressed as
\begin{equation}
\begin{split}
 f_k(x,t\,|\,c) &= \frac{1}{c+v} \sum_{j = 0}^{k-2}
\Prob\{N_{k-1}=j,Z_k=c\,|\,Z_0=c\}
\\
&\times  f^{(k-j-1)}_{D}(t-\tau_{*})
{\int^{t}_{t-\tau_{*}}}f^{(j+1)}_{U}(s-t+\tau_{*})
\overline{F}_{U_{j+2}}(t-s){\rm d}s,
\qquad k\geq 2,
\end{split}
\label{Prob2}
\end{equation}
and
\begin{equation}
\begin{split}
 b_k(x,t\,|\,c)& =\frac{1}{c+v}\Big\{ 
\Prob\{N_{k-1}=k-1,Z_k=-v\,|\,Z_0=c\}
f^{(k)}_{U}(\tau_{*})\overline{F}_{D_1}(t-\tau_{*}) 
\\
& +{\bf 1}_{\{k \geq 2\}} \sum_{j = 0}^{k-2}
\Prob\{N_{k-1}=j,Z_k=-v\,|\,Z_0=c\} 
\\
& \times f^{(j+1)}_{U}(\tau_{*})
{\int^{t}_{\tau_{*}}}f^{(k-j-1)}_{D}(s-\tau_{*})
\overline{F}_{D_{k-j}}(t-s){\rm d}s\Big\},
\qquad k\geq 1,
\end{split}
\label{Prob3}
\end{equation}
where
\begin{equation}
 \tau_{*}=\tau_{*}(x,t)=\frac{vt+x}{c+v},
 \label{tau}
\end{equation}
and where $\Prob\{N_{k-1}=j,Z_k=c\,|\,Z_0=c\}$ and 
$\Prob\{N_{k-1}=j,Z_k=-v\,|\,Z_0=c\}$ are given respectively by
(\ref{formula-bern2}) and (\ref{formula-bern3}) if $A=0$, and by 
(\ref{formula-polya2}) and (\ref{formula-polya3}) if $A>0$.
\end{theorem}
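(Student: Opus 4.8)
The plan is to fix $k\ge 2$, compute $\Prob\{S_{t}\le x,V_{t}=c,M_{t}=k\mid V_{0}=c\}$ and then differentiate in $x$ to obtain $f_k(x,t\mid c)$; the argument for $b_k$ is the mirror image. On $\{V_{0}=c\}$ one has $Z_{0}=c$, and on $\{M_{t}=k\}$ one has $V_{t}=Z_{k}$, so $\{V_{t}=c\}=\{Z_{k}=c\}$. I would condition on the whole velocity configuration $(Z_{0},\dots,Z_{k})$ --- equivalently on $(X_{1},\dots,X_{k+1})$ --- grouping configurations by the value $N_{k-1}=j$, the number of indices among $Z_{1},\dots,Z_{k-1}$ equal to $c$. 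For a configuration with $Z_{k}=c$ and $N_{k-1}=j$, exactly $j+1$ of the first $k$ completed intervals are forward (total length $U^{(j+1)}$) and $k-1-j$ are backward (total length $D^{(k-1-j)}$), while the incomplete interval $[T_{k},T_{k+1})$ is the $(j+2)$-nd forward one, of length $U_{j+2}$. The key structural observation is that, since $\{U_{k},D_{k}\}$ is independent of $\{X_{n}\}$ and the $U$'s and $D$'s are mutually independent, the conditional law of the pair $(S_{t},\mathbf 1_{\{M_{t}=k\}})$ given the configuration depends on the configuration only through $(j,Z_{k})$: $T_{k}$, $S_{T_{k}}$ and $T_{k+1}$ are symmetric functions of the relevant interval lengths, so the order in which forward and backward intervals alternate is irrelevant. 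Hence the sum over configurations collapses and, by (\ref{eq:V0}),
$$\Prob\{S_{t}\le x,V_{t}=c,M_{t}=k\mid V_{0}=c\}=\sum_{j=0}^{k-2}\Prob\{N_{k-1}=j,Z_{k}=c\mid Z_{0}=c\}\,\Prob\{S_{t}\le x,M_{t}=k\mid(j,c)\},$$
the range being $j\le k-2$ because for $j=k-1$ there is no backward interval and $S_{t}=ct$ lies on the boundary (which is also why $f_{1}(x,t\mid c)=0$).

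For the conditional factor I would parametrize by partial sums. With $U:=U^{(j+1)}$, $D:=D^{(k-1-j)}$, $U':=U_{j+2}$ (independent), one has $S_{T_{k}}=cU-vD$, $T_{k}=U+D$, and during the current forward interval $S_{t}=ct-(c+v)D$ and $T_{k+1}=U+D+U'$. Thus $\{S_{t}\le x\}=\{D\ge t-\tau_{*}\}$ with $\tau_{*}$ as in (\ref{tau}) and $\{M_{t}=k\}=\{U+D\le t\}\cap\{U'>t-U-D\}$. Integrating out $U'$ gives $\Prob\{S_{t}\le x,M_{t}=k\mid(j,c)\}=\int_{t-\tau_{*}}^{t}f^{(k-1-j)}_{D}(d)\bigl(\int_{0}^{t-d}f^{(j+1)}_{U}(u)\,\overline{F}_{U_{j+2}}(t-u-d)\,{\rm d}u\bigr){\rm d}d$. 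Differentiating in $x$ --- the only $x$-dependence is in the lower limit $t-\tau_{*}$, with $\partial(t-\tau_{*})/\partial x=-1/(c+v)$ --- produces $\frac{1}{c+v}f^{(k-1-j)}_{D}(t-\tau_{*})\int_{0}^{\tau_{*}}f^{(j+1)}_{U}(u)\,\overline{F}_{U_{j+2}}(\tau_{*}-u)\,{\rm d}u$, and the change of variable $s=u+t-\tau_{*}$ rewrites the inner integral as the one in (\ref{Prob2}). Summing over $j$ yields (\ref{Prob2}).

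The derivation of (\ref{Prob3}) is parallel with $U$ and $D$ interchanged, and splits into two cases according to whether there is ($j\le k-2$, possible only if $k\ge2$) or is not ($j=k-1$, allowed for every $k\ge1$) a completed backward interval. When $Z_{k}=-v$ it is $U^{(j+1)}$ that is pinned by $\{S_{t}=x\}$: with $U:=U^{(j+1)}$, $D:=D^{(k-1-j)}$ and the survival duration $D_{k-j}$ of the current backward interval, one gets $S_{t}=(c+v)U-vt$, so $\{S_{t}\le x\}=\{U\le\tau_{*}\}$ and $\{M_{t}=k\}=\{U+D\le t\}\cap\{D_{k-j}>t-U-D\}$. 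The same integrate-then-differentiate step (now the $x$-dependence sits in the upper limit $\tau_{*}$, with $\partial\tau_{*}/\partial x=1/(c+v)$), followed by the substitution $s=d+\tau_{*}$, gives the $\mathbf 1_{\{k\ge2\}}$-sum in (\ref{Prob3}); the case $j=k-1$, where $D^{(0)}\equiv0$ and the current backward interval is $D_{1}$, gives the first term of (\ref{Prob3}) and in particular the value of $b_{1}(x,t\mid c)$. The probabilities $\Prob\{N_{k-1}=j,Z_{k}=c\}$ and $\Prob\{N_{k-1}=j,Z_{k}=-v\}$ are then supplied by the formulas of Appendix \ref{appendix-N} (Bernoulli if $A=0$, P\'olya if $A>0$).

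I expect the only real obstacle to be the bookkeeping of the decomposition: identifying, in each of the three situations ($f_{k}$; $b_{k}$ with and without a completed backward interval), which partial sum ($U^{(\cdot)}$ or $D^{(\cdot)}$) is fixed by $\{S_{t}=x\}$ and which fresh duration ($U_{j+2}$ or $D_{k-j}$) supplies the tail factor, and keeping the indices straight --- it is here that $\tau_{*}$ and $t-\tau_{*}$ exchange their roles. The analytic steps (differentiation under the integral sign, legitimate by absolute continuity of the $U_{k},D_{k}$, and the two linear changes of variable that bring the answer to the stated form) are routine, and the only genuinely substantive point is the order-irrelevance observation that lets the sum over configurations reduce to a sum over $j$.
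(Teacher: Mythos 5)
Your proposal is correct and follows essentially the same route as the paper: decompose on $\{N_{k-1}=j,\,Z_k\}$, use the independence of the trial outcomes from the durations (and the mutual independence of the $U$'s and $D$'s) to reduce everything to the partial sums $U^{(j+1)}$, $D^{(k-j-1)}$ and the fresh duration $U_{j+2}$ or $D_{k-j}$, and then extract the density with the factor $1/(c+v)$. The only cosmetic difference is that you differentiate the conditional distribution function in $x$ (the $1/(c+v)$ arising from $\partial\tau_*/\partial x$), whereas the paper evaluates the joint density of $\bigl(U^{(j+1)}+D^{(k-j-1)},\,cU^{(j+1)}-vD^{(k-j-1)}\bigr)$ via its Jacobian; the two computations are equivalent.
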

%%%
\begin{proof}
Recalling (\ref{densf_k}), for $t > 0$, $-vt < x < ct$, and $k\geq 2$ we have 
\begin{eqnarray*}
&& \hspace*{-0.7cm}
f_k(x,t\,|\,c){\rm d}x 
={\int^{t}_{0}}
\Prob\{T_{k}\in {\rm d}s,Z_{k}=c, S_{s}+c(t-s)\in {\rm d}x, T_{k+1}-T_{k}>t-s, 
\\
&& \hspace*{3.2cm}
0\leq N_{k-1} \leq k-2\,|\, Z_{0}=c\}.
\end{eqnarray*}
(Case $k=1$ does not give an absolutely continuous component). 
Conditioning on $N_{k-1}$, and taking into account the number of time
periods during which the particle moved forward and backward, we obtain
\begin{eqnarray*}
&& \hspace*{-0.7cm}
f_k(x,t\,|\,c){\rm d}x 
=\sum_{j = 0}^{k-2}
 {\int^{t}_{0}}
\Prob\{
U^{(j+1)}+D^{(k-j-1)}\in{\rm d}s,cU^{(j+1)}-vD^{(k-j-1)}+c(t-s)\in{\rm d}x\} 
\nonumber
\\
&& \hspace*{1.3cm}
\times \Prob\{U_{j+2}>t-s\}\Prob\{N_{k-1}=j,Z_k=c\,|\,Z_{0}=c\}.
\nonumber
\end{eqnarray*}
Note that conditions $S_{s}+c(t-s)=x$ and $S_{s}\geq -vs$ provide 
$s\geq (ct-x)/(c+v)=t- \tau_{*}$. 
This inequality and the independence of $U^{(k)}$ and $D^{(k)}$ thus give 
$$
 f_k(x,t\,|\,c)=\sum_{j = 0}^{k-2}\Prob\{N_{k-1}=j,Z_k=c\,|\,Z_0=c\}
 {\int^{t}_{t- \tau_{*}}} h(s,x-c(t-s))\Prob\{U_{j+2}>t-s\}{\rm d}s,
$$ 
where $h(\cdot,\cdot)$ is the joint probability density of
$(U^{(j+1)}+D^{(k-j-1)},cU^{(j+1)}-vD^{(k-j-1)})$.  Since
$$
h(s,x-c(t-s))=
\frac{1}{c+v}{f^{(j+1)}_{U}\left(s-\frac{ct-x}{c+v}\right)f^{(k-j-1)}_{D}
\left(\frac{ct-x}{c+v}\right)},
$$ Eq.\ (\ref{Prob2}) thus follows recalling
(\ref{tau}). Eq.\ (\ref{Prob3}) can be obtained in a similar
way. Indeed for $k\geq 1$
\begin{eqnarray*}
&& \hspace*{-0.7cm}
b_k(x,t\,|\,c){\rm d}x 
={\int^{t}_{0}}
\Prob\{T_{k}\in {\rm d}s,Z_{k}=-v, S_{s}-v(t-s)\in {\rm d}x, T_{k+1}-T_{k}>t-s, 
\\
&& \hspace*{2.8cm}
0\leq N_{k-1} \leq k-1\,|\, Z_{0}=c\}.
\end{eqnarray*}
Conditioning on $N_{k-1}$, and taking into account the number of time
periods during which the particle moved forward and backward, we
obtain for $k\geq 1$
\begin{equation}
\begin{split}
b_k(x,t\,|\,c){\rm d}x &=
{\int^{t}_{0}}\Prob\{U^{(k)}\in{\rm d}s,cU^{(k)}-v(t-s)\in{\rm d}x\} 
\\
& \times \Prob\{D_1>t-s\}\Prob\{N_{k-1}=k-1,Z_k=-v\,|\,Z_{0}=c\}
\\
& +\sum_{j = 0}^{k-2}
 {\int^{t}_{0}}\Prob\{
U^{(j+1)}+D^{(k-j-1)}\in{\rm d}s,cU^{(j+1)}-vD^{(k-j-1)}-v(t-s)\in{\rm d}x\} 
\\
& \times \Prob\{D_{k-j}>t-s\}\Prob\{N_{k-1}=j,Z_k=-v\,|\,Z_{0}=c\}.
\end{split}
\nonumber
\end{equation}
(Note that for $k=1$ the above term
$\Prob\{N_{k-1}=k-1,Z_k=-v\,|\,Z_{0}=c\}$ reduces to
$\Prob\{Z_1=-v\,|\,Z_{0}=c\}$ and the sum on $j$ is equal to zero.)
For the first integral, conditions $U^{(k)}=s$ and $cU^{(k)}-v(t-s)=x$
imply $s=\tau_*$; while in the second integral, conditions
$S_{s}-v(t-s)=x$ and $S_{s}\leq cs$ provide $s\geq
(x+vt)/(c+v)=\tau_{*}$.  This inequality and the independence of
$U^{(k)}$ and $D^{(k)}$ thus give for the above sum on $j$
$$
\sum_{j = 0}^{k-2}
\Prob\{N_{k-1}=j,Z_k=-v\,|\,Z_0=c\}
{\int^{t}_{\tau_{*}}}
 h(s,x+v(t-s))\Prob\{D_{k-j}>t-s\}{\rm d}s,
$$ where, as above, $h(\cdot,\cdot)$ is the joint probability density
 of $(U^{(j+1)}+D^{(k-j-1)},cU^{(j+1)}-vD^{(k-j-1)})$.  Since
$$
h(s,x+v(t-s))=
\frac{1}{c+v}{f^{(j+1)}_{U}\left(\tau_*\right)f^{(k-j-1)}_{D}
\left(s-\tau_*\right)}\,,
$$
we get (\ref{Prob3}).
\end{proof}
\begin{remark}\label{remark-density}
\rm Similarly to (\ref{Prob2}) and (\ref{Prob3}), when the
initial velocity is negative, for all $t>0$ and $-vt < x < ct$, the
densities (\ref{densf_k}) and (\ref{densb_k}) are expressed as
\begin{equation}
\begin{split}
 f_k(x,t\,|\,-v) &=\frac{1}{c+v}\Big\{\Prob\{N_{k-1}=0,Z_k=c\,|\,Z_0=-v\}
f^{(k)}_{D}(t-\tau_{*})\overline{F}_{U_1}(\tau_{*})
\\
&+{\bf 1}_{\{k\geq 2\}} \sum_{j = 0}^{k-2}
\Prob\{N_{k-1}=k-1-j,Z_k=c\,|\,Z_0=-v\}
\\
& \times f^{(j+1)}_{D}(t-\tau_{*})
 \int_{t-\tau_{*}}^{t} f^{(k-j-1)}_{U}(s-t+\tau_{*})
 \overline{F}_{U_{k-j}}(t-s){\rm d}s\Big\},
 \qquad k\geq 1,
\end{split}
\nonumber 
\end{equation}
\begin{equation}
\begin{split}
b_k(x,t\,|\,-v) &=\frac{1}{c+v} \sum_{j = 0}^{k-2}
\Prob\{N_{k-1}=k-1-j,Z_k=-v\,|\,Z_0=-v\}
\\
&\times f^{(k-j-1)}_{U}(\tau_{*}) \int^{t}_{\tau_{*}}f^{(j+1)}_{D}(s-\tau_{*})
\overline{F}_{D_{j+2}}(t-s){\rm d}s,
\qquad k\geq 2,
\end{split}
\nonumber
\end{equation}
where $\tau_{*}$ is defined in (\ref{tau})  and where 
$\Prob\{N_{k-1}=k-1-j,Z_k=c\,|\,Z_0=-v\}$ and 
$\Prob\{N_{k-1}=k-1-j,Z_k=-v\,|\,Z_0=-v\}$ are given respectively by 
(\ref{formula-bern2-neg}) and (\ref{formula-bern3-neg}) if $A=0$, 
and by (\ref{formula-polya2-neg}) and (\ref{formula-polya3-neg}) if $A>0$.
\end{remark}
\par 
In the following proposition, conditional on positive initial velocity, the mean 
of process $V_{t}$ is expressed in terms of the cumulative distribution 
function of $T_k$.  (Obviously, we can obtain a similar expression conditioning 
on negative initial velocity.) We remark that, by conditioning on the value of 
$N_{k-1}$ and using the independence of $U^{(j+1)}$ and $D^{(k-j-1)}$, it follows:
\begin{equation}
 F_{T_k|Z_0}(t\,|\,c)=\sum_{j=0}^{k-1}\Prob\{N_{k-1}=j\,|\,Z_0=c\}
 \Prob\{U^{(j+1)}+D^{(k-j-1)}\leq t\},
 \label{FTk|Z0}
\end{equation}
where 
\begin{equation}
 \Prob\{U^{(j+1)}+D^{(k-j-1)}\leq t\}
 =\int_0^t F_D^{(k-j-1)}(t-s)f_U^{(j+1)}(s)\,{\rm{d}}s
 =\int_0^t F_U^{(j+1)}(t-s)f_D^{(k-j-1)}(s)\,{\rm{d}}s.
 \label{FsumUD}
\end{equation}
\begin{proposition}\label{th-mediaV}
For all $t > 0$ we have
\begin{equation}
\E [V_{t}\,|\,V_{0}=c]=c\,\overline{F}_{U_{1}}(t)+ 
c\pi_A\, \sum_{k = 1}^{+\infty}\phi_k(t\,|\,c)
+\,(-v)(1-\pi_A)\,\sum_{k = 1}^{+\infty}\psi_k(t\,|\,c)
 \label{mediaV}
\end{equation}
where $\pi_A=\displaystyle\frac{b+A}{b+A+r}$ and 
\begin{equation}
\begin{split}
\phi_k(t\,|\,c)&=
F_{T_k|Z_0}(t\,|\,c)- {\int^{t}_{0}}F_{T_{k}|Z_0}(t-s\,|\,c)f_{U_{k+1}}(s){\rm{d}}s, 
\\
\psi_k(t\,|\,c)&= 
F_{T_k|Z_0}(t\,|\,c)-{\int^{t}_{0}}F_{T_{k}|Z_0}(t-s\,|\,c)f_{D_{k+1}}(s){\rm{d}}s.
\end{split}
\label{phi-gen}
\end{equation}
\end{proposition}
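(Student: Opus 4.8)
The plan is to expand $\E[V_t\mid V_0=c]$ according to the value of the counting process $M_t$. Writing
$$\E[V_t\mid V_0=c]=\sum_{k=0}^{+\infty}\E\big[V_t\,\mathbf{1}_{\{M_t=k\}}\mid V_0=c\big],$$
which is legitimate since $|V_t|\le\max\{c,v\}$ and $\{M_t=k\}$, $k\ge 0$, form a partition, I would first dispose of the term $k=0$: on $\{M_t=0\}=\{T_1>t\}$ one has $V_t=V_0$, and conditionally on $V_0=c$ the first epoch $T_1$ coincides with $U_1$, so this term equals $c\,\overline F_{U_1}(t)$ --- the leading summand of (\ref{mediaV}). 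For $k\ge 1$, since $V_t=Z_{M_t}=Z_k\in\{c,-v\}$ on $\{M_t=k\}$, I would use the telescoping identity $\mathbf{1}_{\{M_t=k\}}=\mathbf{1}_{\{T_k\le t\}}-\mathbf{1}_{\{T_{k+1}\le t\}}$ (valid because $T_{k+1}\le t$ forces $T_k\le t$) to split the $k$-th term into a ``$Z_k=c$'' part, carrying weight $c$, and a ``$Z_k=-v$'' part, carrying weight $-v$, each of which is a difference of two probabilities of the form $\Prob\{Z_k=y,\,T_j\le t\mid Z_0=c\}$ with $j\in\{k,k+1\}$.

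Each such probability is then evaluated by conditioning on $N_{k-1}$, equivalently on the velocity pattern $Z_0,\dots,Z_k$: this fixes how many of the first $k+1$ time periods are forward ones and how many are backward ones, so that $T_k$ and $T_{k+1}$ become explicit partial sums of $U_i$'s and $D_i$'s, and on $\{Z_k=c\}$ (resp.\ $\{Z_k=-v\}$) $T_{k+1}$ is obtained from $T_k$ by adding the next forward (resp.\ backward) duration. Using that the durations are independent of the trials, together with the convolution formulas (\ref{FTk|Z0})--(\ref{FsumUD}), the sum over the velocity pattern reassembles $F_{T_k\mid Z_0}(\cdot\mid c)$; moreover the conditional probability that $Z_k$ equals $c$ (resp.\ $-v$) given $Z_0=c$ is, by (\ref{eq:V0})--(\ref{eq:Vn}) together with the exchangeability of $\{X_n\}$ in the P\'olya case and their independence in the Bernoulli case, the constant $\pi_A$ (resp.\ $1-\pi_A$), which lets that factor be pulled out. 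Comparing the outcome with the definitions (\ref{phi-gen}) identifies the $k$-th summand as $c\,\pi_A\,\phi_k(t\mid c)+(-v)(1-\pi_A)\,\psi_k(t\mid c)$; summing over $k\ge 1$ and restoring the $k=0$ term yields (\ref{mediaV}). An entirely equivalent route is to integrate the densities $f_k,b_k$ of Theorem \ref{Theor1} and Remark \ref{remark-density} over $x\in(-vt,ct)$ and add the atoms of Proposition \ref{prop:discr}.

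The delicate step is the one just sketched: one must keep precise track, for each admissible velocity pattern, of which partial sum of $U_i$'s and $D_i$'s represents $T_k$ and which represents $T_{k+1}$, and then verify that, after these contributions are weighted by the probabilities in (\ref{eq:V0})--(\ref{eq:Vn}) and summed over the pattern, everything collapses to $F_{T_k\mid Z_0}(\cdot\mid c)$ convolved with the single interarrival density $f_{U_{k+1}}$ (resp.\ $f_{D_{k+1}}$) that appears in (\ref{phi-gen}), with the scalar weight factoring through $\pi_A$ --- this last factorization being exactly where the structure of the two trial schemes enters. The remaining points are routine: the interchanges of summation with the integrals in (\ref{phi-gen}) and with the conditional expectation follow from non-negativity and the boundedness of $V_t$ (monotone / dominated convergence), and the companion formula conditional on $V_0=-v$ is obtained by exchanging the roles of $c$ and $-v$, of $U$ and $D$, and of $b$ and $r$.
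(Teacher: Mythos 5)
Your scaffolding is the same as the paper's: isolate the $k=0$ term $c\,\overline F_{U_1}(t)$, decompose the rest over $\{M_t=k\}=\{T_k\le t<T_{k+1}\}$, split according to $Z_k=c$ or $Z_k=-v$, and extract the factors $\pi_A$, $1-\pi_A$ coming from $\Prob\{X_{k+1}=1\mid X_1=1\}$. The trouble is that everything the proposition actually asserts is contained in the step you yourself call ``delicate'' and then do not carry out, namely
\begin{equation*}
\Prob\{Z_k=c,\,T_k\le t<T_{k+1}\mid Z_0=c\}=\pi_A\,\phi_k(t\mid c),\qquad
\Prob\{Z_k=-v,\,T_k\le t<T_{k+1}\mid Z_0=c\}=(1-\pi_A)\,\psi_k(t\mid c),
\end{equation*}
with $\phi_k,\psi_k$ as in (\ref{phi-gen}). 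You state that the pattern-by-pattern bookkeeping ``collapses'' to $F_{T_k\mid Z_0}(\cdot\mid c)$ convolved with the single density $f_{U_{k+1}}$ (resp.\ $f_{D_{k+1}}$), with the weight factoring through $\pi_A$, but you give no argument, and this is the only non-mechanical point of the whole proof.

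Moreover, the collapse is not the routine verification you suggest. Conditioning on the pattern as you propose gives, on $\{Z_0=c,\,Z_k=c,\,N_{k-1}=j\}$, that $T_k=U^{(j+1)}+D^{(k-j-1)}$ while $T_{k+1}-T_k=U_{j+2}$ (the index that appears in the definition of $W_k$ and in the factor $\overline F_{U_{j+2}}$ of Theorem \ref{Theor1}), not $U_{k+1}$; on $\{Z_k=-v,\,N_{k-1}=j\}$ the increment is $D_{k-j}$, never $D_{k+1}$. Likewise, the weight $\Prob\{N_{k-1}=j,\,Z_k=c\mid Z_0=c\}$ equals, in the P\'olya scheme, $\frac{b+A(j+1)}{b+r+Ak}\,\Prob\{N_{k-1}=j\mid Z_0=c\}$ by (\ref{formula-polya2}), which is not $\pi_A\,\Prob\{N_{k-1}=j\mid Z_0=c\}$ term by term. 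So the sum over the pattern does not visibly reproduce $F_{T_k\mid Z_0}(t\mid c)-\int_0^t F_{T_k\mid Z_0}(t-s\mid c)f_{U_{k+1}}(s)\,{\rm d}s$, and your sketch supplies no aggregate argument bridging the discrepancy. The paper's proof performs exactly this identification in one line, writing $\Prob\{T_k\le t<T_{k+1}\mid Z_0=c,\,Z_k=c\}=\int_0^{\infty}\Prob(t-s<T_k\le t\mid Z_0=c)\,f_{U_{k+1}}(s)\,{\rm d}s$ (and the analogue with $f_{D_{k+1}}$), i.e.\ by treating the interarrival after $T_k$ as a copy of $U_{k+1}$ independent of $T_k$, and the conditional law of $T_k$ given $Z_0=c,\,Z_k=\cdot$ as its law given $Z_0=c$ alone. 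Whether one reproduces that assertion or justifies the identity in some other way, this is the substance of the proposition, and your proposal leaves it unproved; the alternative route you mention (integrating the densities of Theorem \ref{Theor1} over $(-vt,ct)$ and adding the atoms of Proposition \ref{prop:discr}) is likewise only named, not executed.
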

\begin{proof}
We recall that in both cases (by independence in the Bernoulli scheme
and by exchangeability in P\'olya scheme), we have for each $n\geq 2$
\begin{equation}
\Prob\{X_{n}=1\,|\,X_1=1\}=\pi_A,
\qquad 
\Prob\{X_{n}=0\,|\,X_1=1\}=1-\pi_A\,. 
\label{legge-X}
\end{equation}
Hence, for every positive integer $k\geq 1$, recalling
(\ref{eq:defMt}) and the first of (\ref{eq:VP}), we have
$$
 \E [V_{t}\,|\,V_{0}=c]= \E [Z_{M_t}\,|\,Z_{0}=c]
 =c\overline{F}_{U_1}(t)+
\sum_{k=1}^{+\infty}\E[Z_{k}\cdot \textbf{1}_{\left\{T_{k}\leq t < T_{k+1}\right\}}\,|\,Z_0=c].
$$
Since, for $k\geq 1$,
\begin{equation*}
\begin{split}
\E[Z_{k}\cdot \textbf{1}_{\left\{T_{k}\leq t < T_{k+1}\right\}}\,&|\,Z_0=c]\\
&
=\,c\,\Prob\{Z_k=c\,|\,Z_0=c\}\,
\E[\textbf{1}_{\left\{T_{k}\leq t < T_{k+1}\right\}}\,|\,Z_0=c,Z_{k}=c]\\
&+\,
(-v)\,\Prob\{Z_k=-v\,|\,Z_0=c\}\,
\E[\textbf{1}_{\left\{T_{k}\leq t < T_{k+1}\right\}}\,|\,Z_0=c,Z_{k}=-v]
\\
&
=\,c\,\Prob\{X_{k+1}=1\,|\,X_1=1\}\,
\Prob\{T_{k} \leq t < T_{k+1}\,|\,Z_0=c,Z_{k}=c\}\\
&+\,
(-v)\Prob\{X_{k+1}=0\,|\,X_1=1\}\,\Prob\{T_{k} \leq t < T_{k+1}\,|\,Z_0=c,Z_{k}=-v\}
\\
&
=\,c\pi_A\,
\int^{\infty}_{0}\Prob(t-s <T_{k} \leq t\,|\,Z_0=c) f_{U_{k+1}}(s){\rm{d}}s
\\
&+\,(-v)(1-\pi_A)\,
\int^{\infty}_{0}\Prob(t-s <T_{k} \leq t\,|\,Z_0=c) f_{D_{k+1}}(s){\rm{d}}s
\\
& 
=\,c\pi_A\, \phi_k(t\,|\,c)+\,(-v)(1-\pi_A)\,\psi_k(t\,|\,c)\,.
\end{split}
\end{equation*}
Eq.\ (\ref{mediaV}) thus follows, due to (\ref{phi-gen}).
\end{proof}
\par 
The following remark turns out to be useful in some particular cases.
\begin{remark}\label{remarkFT}
\rm It is worthwhile to note that, using (\ref{FTk|Z0}) and (\ref{FsumUD}), 
Eqs.\ (\ref{phi-gen}) can be rewritten as
\begin{equation*}%\label{phi}
\begin{split}
 \phi_k(t\,|\,c)&=\sum_{j = 0}^{k-1}\Prob\{N_{k-1}=j|Z_0=c\}
 \int_0^t F_D^{(k-j-1)}(t-y)\left[f^{(j+1)}_U(y)-f_{U^{(j+1)}+U_{k+1}}(y)\right]{\rm d}y,\\
 \psi_k(t\,|\,c)&=\sum_{j = 0}^{k-1}\Prob\{N_{k-1}=j|Z_0=c\}
 \int_0^t F_U^{(j+1)}(t-y)\left[f^{(k-j-1)}_D(y)-f_{D^{(k-j-1)}+D_{k+1}}(y)\right]{\rm d}y.
\end{split}
\end{equation*}
\par
For instance, the above quantities can be easily computed when the
random variables $U_k$ (and $D_k$) are gamma distributed with 
the same scale parameter, since it is well-known that, by independence, 
the sums of the involved random variables are still gamma distributed.
\end{remark} 
\begin{remark}
\rm In the special case in which $(Z_k)_k$ and $(T_{k+1}-T_k)_k$ are independent, we have $U_1$ and $D_1$ identically distributed and 
$$
 \phi_k(t\,|\,c)=\psi_k(t\,|\,c)=\Prob\{T_k\leq t<T_{k+1}\}
$$ 
so that we may write Eq.\ (\ref{mediaV}) as
$$
 \E[V_t\,|\,V_0=c]=c\overline{F}_{T_1}(t)+\E[Z_1\,|\,Z_0=c]F_{T_1}(t)\,.
$$ 
Similarly, we have
$$
 \E[V_t\,|\,V_0=-v]=-v\overline{F}_{T_1}(t)+\E[Z_1\,|\,Z_0=-v]F_{T_1}(t)
$$ 
and so we find 
\begin{equation*}
\begin{split}
\E[V_t]&=\left(c\Prob\{Z_0=c\}-v\Prob\{Z_0=-v\}\right)
\overline{F}_{T_1}(t)\\
&+
\left(\E[Z_1\,|\,Z_0=c]\Prob\{Z_0=c\}+\E[Z_1\,|\,Z_0=-v]\Prob\{Z_0=-v\}\right)
F_{T_1}(t)\\
&=\E[Z_0]\overline{F}_{T_1}(t)+\E[Z_1]F_{T_1}(t)\\
&=\E[Z_0]\,.
\end{split}
\end{equation*}
The last equality is due to the fact that the random variables $X_k$, and so $Z_k$, 
are identically distributed in both cases $A=0$ and $A\neq 0$. As a consequence, we get
$$
 \E[S_t]=t\E[Z_0]\,.
$$
This is the same formula found in Stadje and Zacks \cite{StZa2004} for the telegraph process 
driven by i.i.d.\ random velocities and an independent homogeneous Poisson process. 
\end{remark}
\par In the following sections we discuss some special cases arising
in the two different schemes of Bernoulli and P\'olya trials, and
leading to closed forms for the probability law of $S_t$.
%===========================================================================
\section{Particular case for Bernoulli scheme $(A=0)$}
\label{bernoulli-case}
%===========================================================================
The classical telegraph process is characterized by exponentially
distributed times separating consecutive velocity changes. 
The extension of such a model to the case of velocities driven by Bernoulli 
trials can be performed in a simple and natural way. Indeed, if we consider a 
simple telegraph process $\widetilde {S_t}$ with alternating velocities $c$ and $-v$, 
and with alternating switching intensities $\widetilde \lambda:=(1-p)\lambda$ 
and $\widetilde \mu:=p\mu$, then it can be proved that the marginal distributions 
of $\widetilde S_t$, $t\geq 0$, and $S_t$, $t\geq 0$, are identical (where $S_t$ 
is a telegraph process with switching intensities $\lambda$ and $\mu$, and 
having the same velocities of $\widetilde S_t$, governed by Bernoulli trials with 
parameter $p$). Hence, in this case results on $S_t$ can be immediately obtained 
by those of $\widetilde S_t$. Other results on this case, when $\lambda=\mu$, 
can be found in Stadje and Zacks \cite{StZa2004}. 
\par
Therefore, aiming to discuss a non trivial case, and stimulated by previous 
studies (see Di Crescenzo and Martinucci \cite{DiCrMa2010} and 
Di Crescenzo {\em et al.} \cite{DiCr2011}) involving finite-velocity random 
motions with stochastically decreasing random intertimes, in the following we 
assume that the r.v.'s $U_{k}$ and $D_{k}$ have exponential distribution with
linear rates $\lambda k$ and $\mu k$. Hence, the tail distribution functions are
\begin{equation}
 \overline{F}_{U_{k}}(t)={\rm e}^{-\lambda k t},
 \qquad \overline{F}_{D_{k}}(t)={\rm e}^{-\mu k t},
 \qquad t\geq 0,
 \label{distrnew1}
\end{equation}
with $\lambda,\mu >0$. Figure 2 shows some simulations of $S_{t}$ in
the present case, where the particle exhibits a kind of damped motion. 
This special case belongs to a more general framework in which counting processes 
with increasing intensity function are employed in applied fields. A typical example in this 
respect is the non-homogeneous Poisson process with increasing intensity function, 
which deserves interest in reliability contexts involving repairable systems (see, for instance, 
Cohen and Sackrowitz \cite{ChSa93}, or Di Crescenzo and Martinucci \cite{DiCrMa2009} for a 
power-law process). In other cases, as in the present model, the effect of an increasing intensity 
function can be obtained also by assuming increasing arrival rates (see Brown \cite{Br2011}). 
Another example of random motion with shrinking steps is the two-dimensional Pearson walk 
studied in Serino and Redner \cite{SeRe10}. In the latter paper the step size decreases 
deterministically with a geometric rule, whereas in the present model the step length decreases 
stochastically, according to the tail distribution functions specified in (\ref{distrnew1}). 
\par
Due to assumption (\ref{distrnew1}), $U^{(k)}$ and $D^{(k)}$,
$k\geq1$, have generalized exponential densities
\begin{eqnarray}
 f^{(k)}_{U}(t)=k(1-{\rm e}^{-\lambda t})^{k-1}\lambda {\rm e}^{-\lambda t},
 \qquad f^{(k)}_{D}(t)=k(1-{\rm e}^{-\mu t})^{k-1}\mu {\rm e}^{-\mu t},
 \qquad t>0
 \label{dens2}
\end{eqnarray}
with corresponding cumulative distribution functions 
\begin{eqnarray}
F^{(k)}_{U}(t)=(1-{\rm e}^{-\lambda t})^{k},
 \qquad 
F^{(k)}_{D}(t)=(1-{\rm e}^{-\mu t})^{k},
 \qquad t\geq 0.
 \label{cum-distrib2}
\end{eqnarray}
Hence, $U^{(k)}$ and $D^{(k)}$ are distributed as the maximum of $k$
i.i.d.\ random variables having exponential distributions with rates
$\lambda$ and $\mu$, respectively.  
\par
%  ----------------------- figura 2 -----------------------------------
\begin{figure}[t]
\centerline{
\epsfxsize=8cm
\epsfbox{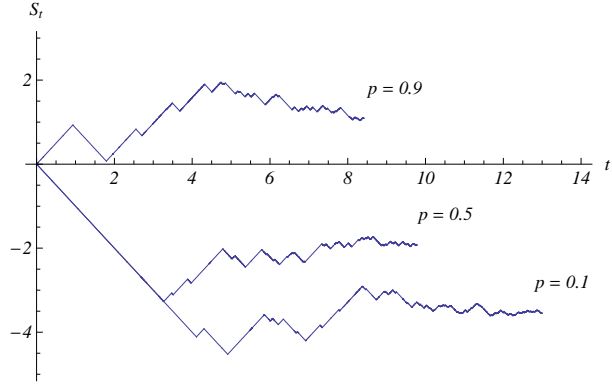}
}
% --------------------------------------------------------------------
\caption{Simulated sample paths of $S_t$ in the Bernoulli scheme, 
exponential damped case, with $\lambda=\mu=1$ and $c=v=1$, 
for some choices of $p$.}
\end{figure}
\par 
By making use of the results in the previous section, in the following 
we obtain the probability law of $S_{t}$. We start providing the discrete
component.
\begin{proposition}\label{prop:discresp}
Let $U_{k}$ and $D_{k}$ be exponentially distributed with rates $\lambda k$ 
and $\mu k$, $k=1,2,\cdots$, respectively. For all $t>0$ we have  
$$
 \Prob\{S_{t}=ct\}=\frac{p\, {e}^{-\lambda t}}
{1 - p \left(1-{e}^{-\lambda t}\right)}\,,
 \qquad
 \Prob\{S_{t}=-vt\}=\frac{(1-p)\,{e}^{-\mu t}}
{1 -(1- p) \left(1-{e}^{-\mu t}\right)}\,.
$$
\end{proposition}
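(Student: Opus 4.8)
The plan is to start from the formula for $\Prob\{S_t = ct\}$ given in Proposition \ref{prop:discr}, specialized to the Bernoulli scheme $A=0$, and then substitute the explicit exponential expressions from \eqref{distrnew1} and \eqref{cum-distrib2}. Since $A=0$ we have $\Prob\{Z_0=c, N_k=k\} = p^{k+1}$, while $\overline{F}_{U_1}(t) = e^{-\lambda t}$ and $F_U^{(k)}(t) = (1-e^{-\lambda t})^k$. Writing $q_t := 1 - e^{-\lambda t}$ for brevity, \eqref{Prob1} becomes
\begin{equation*}
 \Prob\{S_t = ct\} = p\, e^{-\lambda t} + \sum_{k=1}^{+\infty} p^{k+1}\left[ q_t^{\,k} - q_t^{\,k+1}\right]
 = p\, e^{-\lambda t} + p\, e^{-\lambda t} \sum_{k=1}^{+\infty} (p\, q_t)^{k},
\end{equation*}
where I used $q_t^{\,k} - q_t^{\,k+1} = q_t^{\,k}(1-q_t) = q_t^{\,k}\, e^{-\lambda t}$.

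The next step is to recognize the tail as a geometric series. Since $p \in (0,1)$ and $q_t = 1 - e^{-\lambda t} \in [0,1)$ for all $t>0$, we have $p\, q_t \in [0,1)$, so the series converges and $\sum_{k=1}^\infty (p q_t)^k = p q_t / (1 - p q_t)$. Hence
\begin{equation*}
 \Prob\{S_t = ct\} = p\, e^{-\lambda t}\left( 1 + \frac{p q_t}{1 - p q_t}\right) = \frac{p\, e^{-\lambda t}}{1 - p q_t} = \frac{p\, e^{-\lambda t}}{1 - p(1 - e^{-\lambda t})},
\end{equation*}
which is the claimed expression. The formula for $\Prob\{S_t = -vt\}$ follows by the identical computation starting from \eqref{Prob1-neg}, with $p$ replaced by $1-p$, $\lambda$ by $\mu$, and using $\Prob\{Z_0=-v, N_k=0\} = (1-p)^{k+1}$ together with $\overline{F}_{D_1}(t) = e^{-\mu t}$ and $F_D^{(k)}(t) = (1-e^{-\mu t})^k$.

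This proof is essentially a routine manipulation; there is no serious obstacle. The only point requiring a line of care is the interchange of the infinite sum with the telescoping/geometric simplification, which is justified by absolute convergence since $p q_t < 1$; one should also note the edge behavior as $t \to 0^+$ (where $q_t \to 0$ and $\Prob\{S_t=ct\} \to p$, consistent with \eqref{eq:V0}) and as $t \to \infty$ (where $\Prob\{S_t = ct\} \to 0$, reflecting that with probability one the particle eventually changes direction). I would present the computation for $\Prob\{S_t=ct\}$ in full and simply remark that the negative case is handled symmetrically.
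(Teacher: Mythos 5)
Your proof is correct and follows exactly the route of the paper, which simply states that the result "immediately follows from Proposition \ref{prop:discr} and Eq.\ (\ref{cum-distrib2})"; you have merely written out the substitution and the geometric-series summation that the paper leaves implicit, and your computation checks out.
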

\begin{proof} 
It immediately follows from Proposition \ref{prop:discr} and
Eq.\ (\ref{cum-distrib2}).
\end{proof}
The following theorem and corollary give the absolutely continuous component of
the probability law of $S_t$.
\begin{theorem}\label{th:densdamp}
Under the assumptions of Proposition \ref{prop:discresp}, 
for all $t>0$ and  $-vt < x < ct$, we have  
\begin{eqnarray}
&& \hspace*{-0.4cm}
f(x,t\,|\,c)=
\frac{ (1-p)p\mu {\rm e}^{\mu(t+\tau_{*})} ({\rm e}^{\lambda \tau_{*}}-1)}
{(c+v) \left[ p {\rm e}^{\mu t}+(1-p){\rm e}^{ (\lambda+\mu)\tau_* } \right]^2 }
\label{densitynew1}
\\
&& \hspace*{-0.4cm}
b(x,t\,|\,c)=
\frac{ (1-p)\left[ \lambda p {\rm e}^{ \lambda \tau_*+\mu(t+\tau_*) }+
\lambda (1-p) {\rm e}^{ \lambda\tau_*+ 2\mu\tau_*} \right] }
{ (c+v) \left[ p {\rm e}^{\mu t}+ (1-p) {\rm e}^{ (\lambda+\mu)\tau_*} \right]^2 }\,,
\label{densitynew2}
\end{eqnarray}
where $\tau_{*}$ is defined in $(\ref{tau})$.
\end{theorem}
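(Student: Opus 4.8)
To prove Theorem~\ref{th:densdamp} the plan is to sum the two series in (\ref{density-sum}) for $f(x,t\,|\,c)$ and $b(x,t\,|\,c)$, starting from the general expressions (\ref{Prob2})--(\ref{Prob3}) of Theorem~\ref{Theor1}. In the Bernoulli scheme, conditionally on $Z_0=c$ (that is, $X_1=1$), the variable $N_{k-1}$ is $\mathrm{Binomial}(k-1,p)$ and independent of $X_{k+1}$, so that $\Prob\{N_{k-1}=j,Z_k=c\,|\,Z_0=c\}=\binom{k-1}{j}p^{\,j+1}(1-p)^{k-1-j}$ and $\Prob\{N_{k-1}=j,Z_k=-v\,|\,Z_0=c\}=\binom{k-1}{j}p^{\,j}(1-p)^{k-j}$; these, together with the generalized-exponential densities and distribution functions (\ref{dens2})--(\ref{cum-distrib2}) and the tails (\ref{distrnew1}), make each $f_k$ and $b_k$ an elementary finite sum over $j$. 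Since every summand and integrand is nonnegative, all the rearrangements below are justified by Tonelli's theorem, and the geometric-type series that appear converge because, for $-vt<x<ct$, we have $\tau_*\in(0,t)$, hence $\alpha+\gamma<1$ with $\alpha:=p(1-e^{-\lambda\tau_*})$ and $\gamma:=(1-p)(1-e^{-\mu(t-\tau_*)})$ both in $[0,1)$ and $\alpha<1-\gamma$.

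The first step is to dispose of the inner integrals. In (\ref{Prob2}), the substitution $u=s-t+\tau_*$ followed by $w=1-e^{-\lambda u}$ reduces the integral to $(j+1)\,e^{-\lambda(j+2)\tau_*}\int_0^{1-e^{-\lambda\tau_*}}w^{\,j}(1-w)^{-(j+2)}\,{\rm d}w$, and since $\frac{\rm d}{{\rm d}w}\bigl(\frac{w}{1-w}\bigr)^{j+1}=(j+1)\,w^{\,j}(1-w)^{-(j+2)}$, this equals $e^{-\lambda(j+2)\tau_*}(e^{\lambda\tau_*}-1)^{j+1}$. The same manipulation applied to the two integrals in (\ref{Prob3}) (now with $u=s-\tau_*$ and $w=1-e^{-\mu u}$) gives $e^{-\mu(k-j)(t-\tau_*)}(e^{\mu(t-\tau_*)}-1)^{k-j-1}$, while the boundary term of (\ref{Prob3}) is already explicit through $f_U^{(k)}(\tau_*)$ and $\overline{F}_{D_1}(t-\tau_*)$. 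Inserting these, and collecting the factors $f_D^{(k-j-1)}(t-\tau_*)$ and $f_U^{(j+1)}(\tau_*)$ coming from (\ref{dens2}), one is left with $f_k$ and $b_k$ written as sums over $j$ of binomial coefficients times powers of $e^{-\lambda\tau_*}$, $1-e^{-\lambda\tau_*}$, $e^{-\mu(t-\tau_*)}$ and $1-e^{-\mu(t-\tau_*)}$.

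The crux is the resummation. Re-indexing the double sum $\sum_{k}\sum_{j}$ by $i=j+1\ge1$ and $m=k-j-1\ge1$ (so $k=i+m$ and $\binom{k-1}{j}=\binom{i+m-1}{m}$), the series for $f(x,t\,|\,c)$ becomes a multiple of $\sum_{i,m\ge1}\binom{i+m-1}{m}\,m\,\alpha^{\,i}\gamma^{\,m}$, and the non-boundary part of the series for $b(x,t\,|\,c)$ a multiple of $\sum_{i,m\ge1}\binom{i+m-1}{m}\,i\,\alpha^{\,i}\gamma^{\,m}$. Using the negative-binomial identities $\sum_{m\ge0}\binom{i+m-1}{m}\gamma^{\,m}=(1-\gamma)^{-i}$, $\sum_{m\ge0}m\binom{i+m-1}{m}\gamma^{\,m}=i\gamma(1-\gamma)^{-i-1}$ and $\sum_{i\ge1}i\,x^{i}=x(1-x)^{-2}$, these two double series evaluate to $\alpha\gamma\,(1-\gamma-\alpha)^{-2}$ and $\alpha(1-\gamma)(1-\gamma-\alpha)^{-2}-\alpha(1-\alpha)^{-2}$, respectively. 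To $b(x,t\,|\,c)$ one then adds the boundary contribution, which after summing $\sum_{k\ge1}k\,\alpha^{k-1}=(1-\alpha)^{-2}$ brings in a term proportional to $(1-\alpha)^{-2}$ that exactly cancels the $(1-\alpha)^{-2}$ piece above, leaving only a $(1-\gamma-\alpha)^{-2}$ term. Finally one substitutes $1-\gamma-\alpha=(1-p)e^{-\mu(t-\tau_*)}+p\,e^{-\lambda\tau_*}$ and $1-\gamma=p+(1-p)e^{-\mu(t-\tau_*)}$ and multiplies numerator and denominator by $e^{2\mu t}e^{2\lambda\tau_*}$, which turns the two resulting fractions into exactly (\ref{densitynew1}) and (\ref{densitynew2}).

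I expect the main obstacle to be the bookkeeping of the double summation: one must keep careful track of which index each exponent and each binomial coefficient refers to under the substitution $(j,k)\mapsto(i,m)$, and --- crucially for $b(x,t\,|\,c)$ --- one must not drop the extra boundary term arising from $Z_k=-v$, $N_{k-1}=k-1$, since it is precisely that term which cancels the spurious $(1-\alpha)^{-2}$ contribution and lets (\ref{densitynew2}) collapse to a single fraction. Everything else reduces to the two elementary antiderivatives and the three generating-function identities above.
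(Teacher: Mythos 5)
Your proposal is correct and follows essentially the same route as the paper: specialize Theorem \ref{Theor1} with the Bernoulli probabilities (\ref{formula-bern2})--(\ref{formula-bern3}) and the distributions (\ref{distrnew1})--(\ref{dens2}), evaluate the inner time-integral in closed form, and resum the resulting series, with the boundary term of (\ref{Prob3}) cancelling the spurious $(1-\alpha)^{-2}$ contribution. The only (cosmetic) difference is that you re-index the double sum over $(i,m)=(j+1,k-j-1)$ and use negative-binomial generating functions, whereas the paper performs the $j$-sum at fixed $k$ via the binomial theorem (so for $b(x,t\,|\,c)$ the cancellation occurs termwise in $k$) and then sums a single geometric-type series.
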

%%% 
\begin{proof}
Since $U_{1}$ and $U_{k+1}$ are exponentially distributed with parameters 
$\lambda$ and $\lambda(k+1)$, respectively, from (\ref{density-sum}), 
(\ref{Prob2}) and (\ref{formula-bern2}), recalling (\ref{distrnew1}) and (\ref{dens2}) we get
\begin{equation*}
\begin{split}
f(x,t\,|\,c)&=
\frac{\mu}{c+v}\sum_{k = 2}^{+\infty}\sum_{j = 0}^{k-2}
{k-1 \choose j}p^{j+1}(1-p)^{k-1-j}(k-j-1)
{\rm e}^{-\mu(t-\tau_{*})}\left[1-{\rm e}^{-\mu(t-\tau_{*})}\right]^{k-2-j}
\\
&\times\,
{\int^{t}_{t-\tau_{*}}}\lambda(j+1){\rm e}^{-\lambda(s-t+\tau_{*})}
\left[1-{\rm e}^{-\lambda(s-t+\tau_{*})}\right]^{j}{\rm e}^{-\lambda(t-s)(j+2)}{\rm d}s
\\
&= 
\frac{\mu p (1-p)}{c+v}{\rm e}^{-\mu(t-\tau_{*})}
\sum_{k=2}^{+\infty}(k-1)\sum_{j=0}^{k-2}
{k-2 \choose j}p^{j}(1-p)^{k-2-j}\left[1-{\rm e}^{-\mu(t-\tau_{*})}\right]^{k-2-j}
\\
& 
\times {\rm e}^{-\lambda\tau_*}
{\int^{t}_{t-\tau_{*}}}\lambda(j+1){\rm e}^{-\lambda(t-s)}
\left[{\rm e}^{-\lambda(t-s)}-{\rm e}^{-\lambda\tau_{*}}\right]^{j}{\rm d}s
\\
&=
\frac{\mu p (1-p)}{c+v}{\rm e}^{-\mu(t-\tau_{*})-2\lambda \tau_{*}}({\rm e}^{\lambda \tau_{*}}-1)
\\
&\times 
\sum_{k = 2}^{+\infty}(k-1)\sum_{j = 0}^{k-2}{k-2 \choose j}
\left[p(1-{\rm e}^{-\lambda \tau_{*}})\right]^{j}
\left[(1-p)\left(1-{\rm e}^{-\mu(t-\tau_{*})}\right)\right]^{k-j-2}
\\
&=\frac{\mu p(1-p)}{c+v}{\rm e}^{-\mu(t-\tau_{*})-2\lambda \tau_{*}}({\rm e}^{\lambda \tau_{*}}-1)
 \,\frac{{\rm e}^{2\mu t+2\lambda \tau_{*}}}{\left[(1-p){\rm e}^{(\lambda+\mu)\tau_{*}}
+p\,{\rm e}^{\mu t}\right]^{2}}.
\end{split}
\end{equation*}
This yields density (\ref{densitynew1}). Eq.\ (\ref{densitynew2}) can
be obtained from (\ref{density-sum}), (\ref{Prob3}) and
(\ref{formula-bern3}) in a similar way.  Indeed, for $k=1$ we have
$$
b_1(x,t\,|\,c)=\frac{\lambda(1-p)}{c+v}{\rm e}^{-\lambda\tau_*-\mu(t-\tau_*)}
$$
and, for $k\geq 2$,
\begin{equation*}
\begin{split}
b_k(x,t\,|\,c)&=\frac{1}{c+v}
\left\{
p^{k-1}(1-p)k\left(1-{\rm e}^{-\lambda\tau_*}\right)^{k-1}\lambda {\rm e}^{-\lambda\tau_*-\mu(t-\tau_*)}
+\right.
\\
&+
\left. 
\sum_{j=0}^{k-2}{k-1 \choose j}p^j(1-p)^{k-j}(j+1)\left(1-{\rm e}^{-\lambda\tau_*}\right)^j
\lambda {\rm e}^{-\lambda\tau_*} \right.\\
&
\left. \times \int_{\tau_*}^t(k-j-1)\left(1-{\rm e}^{-\mu(s-\tau_*)}\right)^{k-j-2}
\mu {\rm e}^{-\mu(s-\tau_*)-\mu(k-j)(t-s)}\,{\rm d}s
\right\}
\\
&=
 \frac{\lambda}{c+v}
\left\{ {\rm e}^{-\lambda\tau_*-\mu(t-\tau_*)}
(1-p)k\left[p(1-{\rm e}^{-\lambda\tau_*})\right]^{k-1} 
\right.
\\
&\left. +
{\rm e}^{-\lambda\tau_*}\sum_{j=0}^{k-2}{k-1 \choose j}\left[p(1-{\rm e}^{-\lambda\tau_*})\right]^j
(1-p)^{k-j}(j+1)\right.\\
&
\left. \times {\rm e}^{-\mu(t-\tau_*)}\int_{\tau_*}^t(k-j-1)
\left[{\rm e}^{-\mu(t-s)}-{\rm e}^{-\mu(t-\tau_*)}\right]^{k-j-2}
\mu {\rm e}^{-\mu(t-s)}\,{\rm d}s
\right\}\\
&=
 \frac{\lambda {\rm e}^{-\lambda\tau_*-\mu(t-\tau_*)}(1-p)}{c+v}
\left\{k \alpha^{k-1}+\sum_{j=0}^{k-2}{k-1 \choose j}(j+1)\alpha^j
\beta^{k-1-j}\right\},
\end{split}
\end{equation*}
where we have set 
$$
 \alpha=\alpha(x,t)=p\left(1-{\rm e}^{-\lambda\tau_*}\right)
 \qquad\mbox{and}\qquad 
 \beta=\beta(x,t)=(1-p)\left(1-{\rm e}^{-\mu(t-\tau_*)}\right).
$$
Hence, recalling that (due to binomial theorem)  
$$
\sum_{j=0}^{k-2}{k-1 \choose j}(j+1)\alpha^j \beta^{k-1-j}=
(\alpha+\beta)^{k-1}+(k-1)\alpha(\alpha+\beta)^{k-2}-k\alpha^{k-1}\,,
$$
from the second of (\ref{density-sum}) we finally get     
\begin{equation*}
\begin{split}
b(x,t\,|\,c)&= 
\frac{\lambda {\rm e}^{-\lambda\tau_*-\mu(t-\tau_*)}(1-p)}{c+v}
\left\{1+\sum_{k=2}^{+\infty}(\alpha+\beta)^{k-1}+
\alpha\sum_{k=2}^{+\infty}(k-1)(\alpha+\beta)^{k-2}
\right\}
\\
&=\frac{\lambda {\rm e}^{-\lambda\tau_*-\mu(t-\tau_*)}(1-p)}{c+v}
\frac{1-\beta}{[1-(\alpha+\beta)]^2}\,,
\end{split}
\end{equation*}
which coincides with Eq.\ (\ref{densitynew2}).
\end{proof}
\par
We are now able to find out the probability density of $S_t$ in closed form.
\begin{corollary}\label{th:problaw1}
%\label{rem:pdatoc}
Under the assumptions of Proposition \ref{prop:discresp}, 
for all $t>0$ and $-vt < x < ct$, we have 
\begin{equation}
 p(x,t\,|\,c)=\frac{1-p}{c+v}
\,
\frac{ \lambda(1-p){\rm e}^{(\lambda + 2 \mu)\tau_{*}}
 +\,p\,{\rm e}^{\mu(t+\tau_{*})}\left[(\lambda +\mu){\rm e}^{\lambda \tau_{*}}-\mu\right]}
{ \left[p {\rm e}^{\mu t}+(1-p){\rm e}^{(\lambda+\mu)\tau_*}\right]^2 }
 \label{eq:psmorzc}
\end{equation}
and 
\begin{equation}
 p(x,t\,|\,-v)=
\frac{p}{c+v}
\,
\frac{ {\rm e}^{\mu(t+\tau_*)}\left[(\lambda+\mu) {\rm e}^{\lambda\tau_*}(1-p) +\mu p\right]
-\lambda (1-p) {\rm e}^{(\lambda+2\mu)\tau_*} }
{ \left[p {\rm e}^{\mu t}+(1-p){\rm e}^{(\lambda+\mu)\tau_*}\right]^2 },
 \label{eq:psmorzv}
\end{equation}
where $\tau_{*}$ is defined in $(\ref{tau})$. Hence, for $-vt < x < ct$ we have
\begin{equation}
 p(x,t)=\frac{p}{1-p} \,\frac{1}{s} \,
 \frac{\exp\left\{(\mu- \frac{v}{s})t-\frac{1}{s}x\right\}}
 {\left[1+\frac{p}{1-p} \,\exp\left\{(\mu-\frac{v}{s})t-\frac{1}{s}x\right\}\right]^2},
 \label{densnewp}
\end{equation} 
where $s:=(c+v)/(\lambda+\mu)$. 
\end{corollary}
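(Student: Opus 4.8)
The plan is to derive the three displayed formulas one after the other, each by elementary (if slightly tedious) algebra on expressions already available. \emph{Step 1: the formula for $p(x,t\,|\,c)$.} By the decomposition (\ref{densp}) we have $p(x,t\,|\,c)=f(x,t\,|\,c)+b(x,t\,|\,c)$, and Theorem \ref{th:densdamp} gives both summands in (\ref{densitynew1}) and (\ref{densitynew2}). These two rational expressions already share the common denominator $(c+v)\big[p\,{\rm e}^{\mu t}+(1-p){\rm e}^{(\lambda+\mu)\tau_*}\big]^2$, so I would add the numerators, factor out the common $(1-p)$, and, inside, collect the terms proportional to $p\,{\rm e}^{\mu(t+\tau_*)}$: those are $\mu({\rm e}^{\lambda\tau_*}-1)$ coming from $f$ and $\lambda\,{\rm e}^{\lambda\tau_*}$ coming from $b$, which combine to $(\lambda+\mu){\rm e}^{\lambda\tau_*}-\mu$; the only leftover is the term $\lambda(1-p){\rm e}^{(\lambda+2\mu)\tau_*}$ from $b$. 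This is precisely (\ref{eq:psmorzc}).

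\emph{Step 2: the formula for $p(x,t\,|\,-v)$.} I would obtain this by running the proof of Theorem \ref{th:densdamp} again, but starting from the expressions for $f_k(x,t\,|\,-v)$ and $b_k(x,t\,|\,-v)$ recorded in Remark \ref{remark-density}, with the densities and tails specialized through (\ref{distrnew1})--(\ref{cum-distrib2}) and the Bernoulli weights from the Appendix; the double series telescopes and sums in exactly the same closed form, giving $f(x,t\,|\,-v)$, $b(x,t\,|\,-v)$ and hence (\ref{eq:psmorzv}). A faster alternative is the reflection/relabelling $x\mapsto-x$, $(c,v,\lambda,\mu,p)\mapsto(v,c,\mu,\lambda,1-p)$, under which $\tau_*$ turns into $t-\tau_*$ and the $V_0=c$ problem becomes the $V_0=-v$ problem; applying this substitution to (\ref{eq:psmorzc}) and clearing the resulting exponential prefactor from numerator and denominator yields (\ref{eq:psmorzv}).

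\emph{Step 3: the unconditional density $p(x,t)$ and its logistic form.} By (\ref{eq:defdensp}), $p(x,t)=p\,p(x,t\,|\,c)+(1-p)\,p(x,t\,|\,-v)$. Adding the weighted versions of (\ref{eq:psmorzc}) and (\ref{eq:psmorzv}), the two terms $\pm\lambda p(1-p){\rm e}^{(\lambda+2\mu)\tau_*}$ cancel and, likewise, the $\pm\mu p$ terms cancel, so that the coefficient of ${\rm e}^{\mu(t+\tau_*)}$ collapses to $(\lambda+\mu){\rm e}^{\lambda\tau_*}(p+1-p)=(\lambda+\mu){\rm e}^{\lambda\tau_*}$. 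This leaves
$$
 p(x,t)=\frac{p(1-p)(\lambda+\mu)}{c+v}\;
 \frac{{\rm e}^{(\lambda+\mu)\tau_*+\mu t}}{\big[p\,{\rm e}^{\mu t}+(1-p){\rm e}^{(\lambda+\mu)\tau_*}\big]^2}.
$$
Finally, to match (\ref{densnewp}), I divide numerator and denominator by ${\rm e}^{2(\lambda+\mu)\tau_*}$, use $(\lambda+\mu)/(c+v)=1/s$ and $(\lambda+\mu)\tau_*=(vt+x)/s$ to rewrite the exponent $\mu t-(\lambda+\mu)\tau_*$ as $(\mu-v/s)t-x/s$, and pull $(1-p)^2$ out of the square bracket.

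The three computations are routine; the one place that calls for care is Step 2, namely producing (\ref{eq:psmorzv}): either one re-does the (moderately long) series summation of Theorem \ref{th:densdamp} in the backward-velocity case, or one has to track the exponential prefactors in the reflection argument carefully enough to verify that the denominators of (\ref{eq:psmorzc}) and (\ref{eq:psmorzv}) really do coincide after the substitution.
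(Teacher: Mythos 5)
Your proposal is correct and follows essentially the same route as the paper: add the two densities of Theorem \ref{th:densdamp} to get (\ref{eq:psmorzc}), obtain (\ref{eq:psmorzv}) by the symmetry substitution $\tau_*\mapsto t-\tau_*$, $\lambda\leftrightarrow\mu$, $p\leftrightarrow 1-p$ (the paper invokes exactly this, leaving the exponential-prefactor cancellation you flag as implicit), and combine with weights $p$ and $1-p$ to reach the intermediate expression and then (\ref{densnewp}). No gaps.
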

\begin{proof}
Recalling (\ref{densp}), from densities (\ref{densitynew1}) and (\ref{densitynew2}) 
we obtain (\ref{eq:psmorzc}). By symmetry, density (\ref{eq:psmorzv}) can be 
expressed from (\ref{eq:psmorzc}) by replacing $\tau_{*}$ by $(t-\tau_{*})$ and 
interchanging $\lambda$ with $\mu$ and $p$ with $(1-p)$. Therefore we finally get 
$$ 
p(x,t)=\frac{
p(1-p)(\lambda+\mu)\,{e}^{\mu t+(\lambda+\mu)\tau_{*}}} 
{(c+v)[(1-p) \,{e}^{(\lambda+\mu)\tau_{*}}+p{e}^{\mu t}]^2},
\qquad -vt < x < ct,
$$ 
and then Eq.\ (\ref{densnewp}) easily follows.
\end{proof}
\par
Some plots of density (\ref{densnewp}) are given in Figures 3, 4 and 5 
for various choices of the involved parameters. 
\begin{remark}
\rm 
We can analyse the behavior of the density (\ref{densnewp}) when 
$x$ tends to the endpoints of the state space $[-vt,ct]$. For $t>0$ we have
$$
 \lim_{x \downarrow -vt}p(x,t)
 =\frac{p}{1-p} \,\frac{1}{s} \,
 \frac{{e}^{\mu t}}{\left[1+\frac{p}{1-p} \,{e}^{\mu t}\right]^2}, 
 \qquad 
 \lim_{x \uparrow ct}p(x,t)
 =\frac{p}{1-p} \,\frac{1}{s} \,
 \frac{{e}^{-\lambda t}}{\left[1+\frac{p}{1-p} \,{e}^{-\lambda t}\right]^2}.
$$
\end{remark}
%
%%%%%%%%%  FIG 3  %%%%%% 
\begin{figure}[t]
\centerline{
\epsfxsize=7.cm
\epsfbox{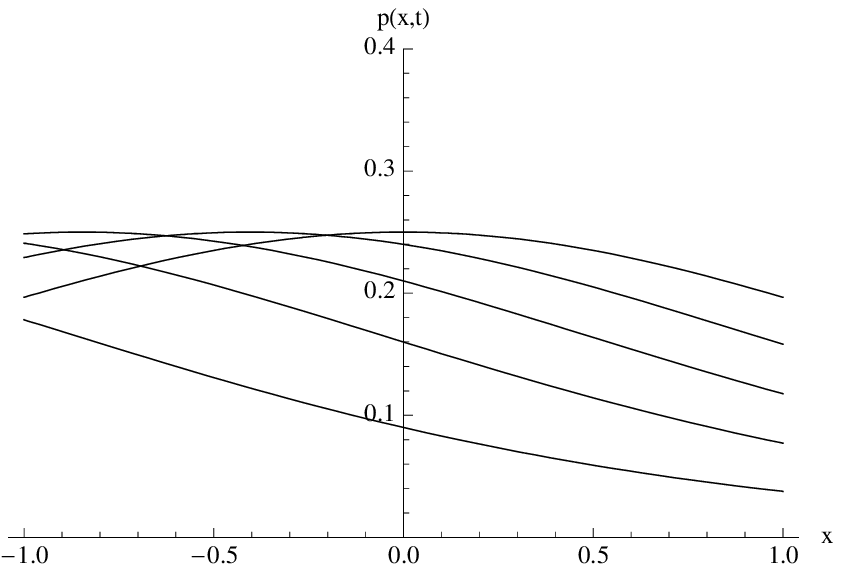}
\;
\epsfxsize=7.cm
\epsfbox{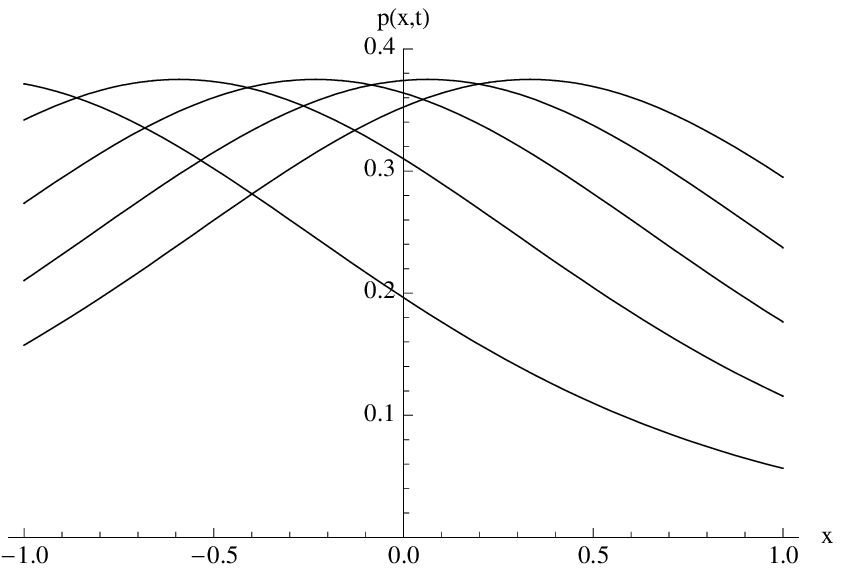}
}
\caption{
Density (\ref{densnewp}), with 
$t=1$, $c=v=1$, $\lambda=1$, for $\mu=1$ ({\em left}) and $\mu=2$ ({\em right}), 
and $p=0.1$, $0.2$, $0.3$, $0.4$, $0.5$ (from bottom to top near $x=1$).
}
\end{figure}
%%%%%%%%%%%%%%%% 
%
%%%%%%%%%  FIG 4  %%%%%% 
\begin{figure}[t]
\centerline{
\epsfxsize=7.cm
\epsfbox{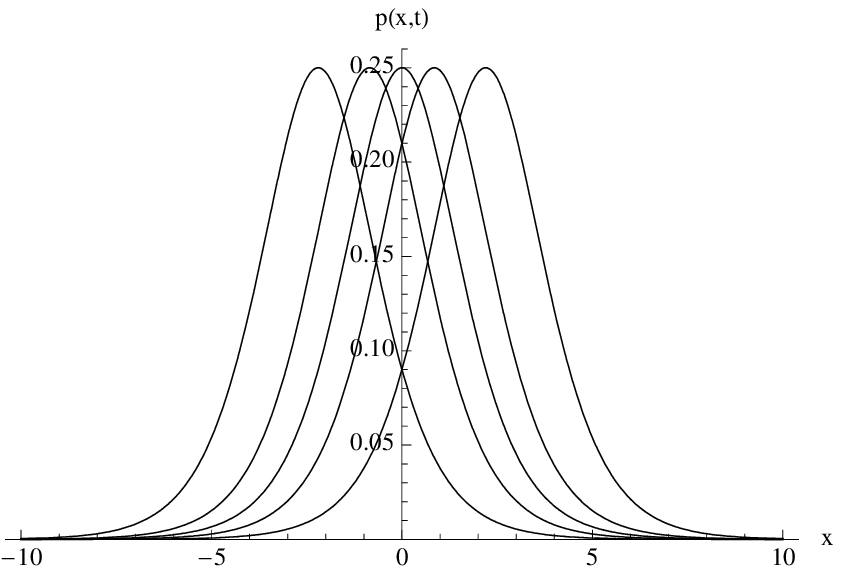}
\;
\epsfxsize=7.cm
\epsfbox{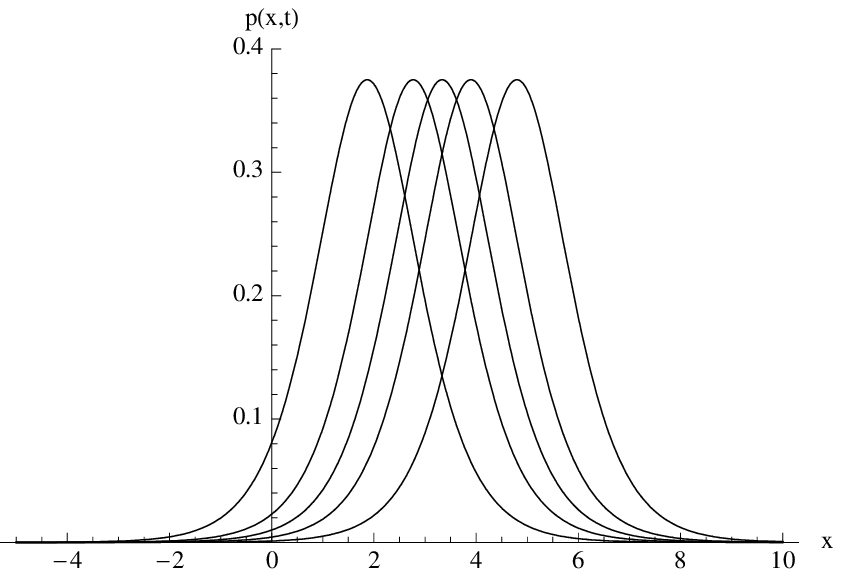}
}
\caption{Same as Figure 3, with $t=10$ and $p=0.1$, $0.3$, $0.5$, $0.7$, $0.9$
  (from left to right in both plots).}
\end{figure}
%%%%%%%%%%%%%%%% 
%
%%%%%%%%%  FIG 5  %%%%%% 
\begin{figure}[t]
\centerline{
\epsfxsize=7.cm
\epsfbox{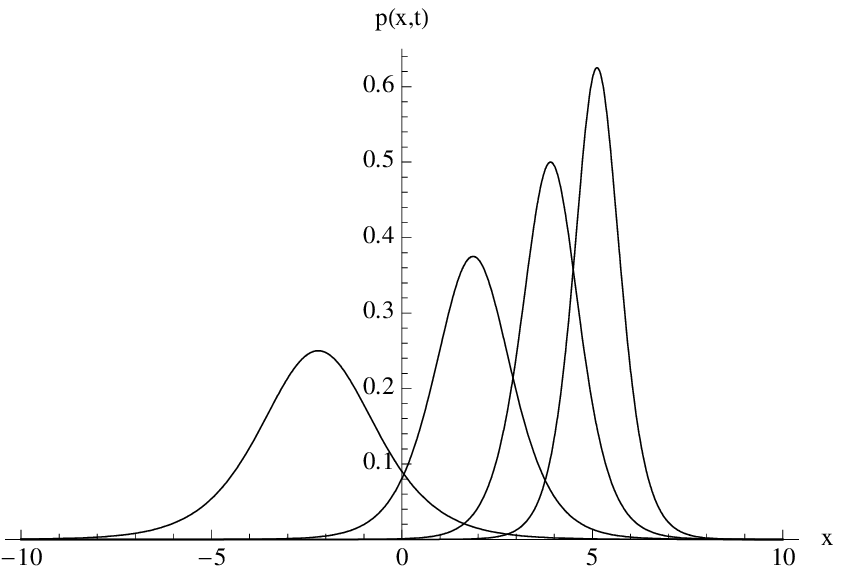}
\;
\epsfxsize=7.cm
\epsfbox{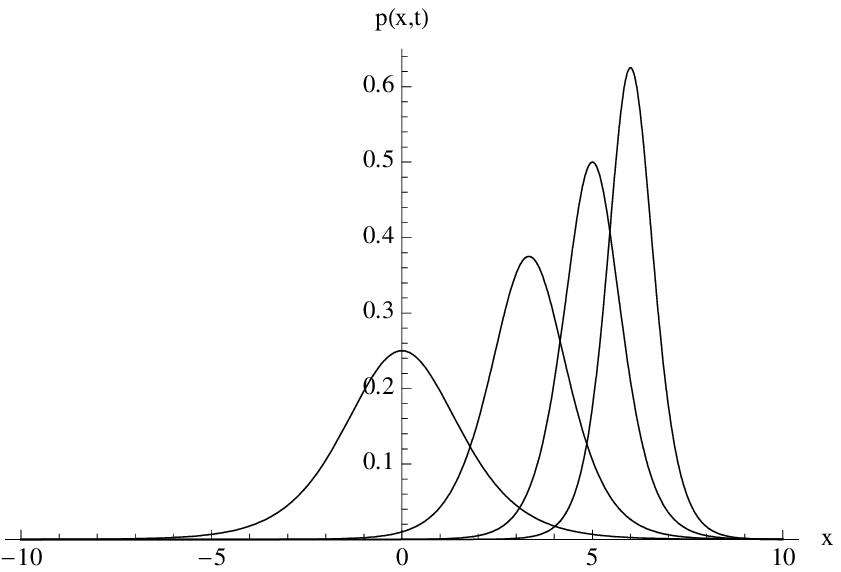}
}
\caption{Same as Figure 4, 
for $p=0.1$ ({\em left}) and $p=0.5$ ({\em right}), 
with  $\mu=1$, $2$, $3$, $4$ (from left to right in both plots).}
\end{figure}
%%%%%%%%%%%%%%
%
\par
The following special case follows by straightforward calculations.
\begin{corollary} 
Under the assumptions of Proposition \ref{prop:discresp}, 
if $\lambda\,v=\mu\,c$ then density (\ref{densnewp}) becomes  the 
truncated logistic density given by 
$$
p(x,t)
 =\frac{{e}^{-(x-m)/s}}{s\left[1+ {e}^{-(x-m)/s}\right]^{2}},
 \qquad -vt < x < ct,\;\; t>0,
$$
$$
  m= s\,\ln\left(\frac{p}{1-p}\right),
  \qquad
  s=\frac{v}{\mu}=\frac{c}{\lambda}.
$$
Hence, in this case $S_t$ admits the stationary density
\begin{equation}
 \lim_{t\to +\infty}p(x,t)
 =\frac{{e}^{-(x-m)/s}}{s\left[1+ {e}^{-(x-m)/s}\right]^{2}},
 \qquad x \in  \mathbb{R}.
\label{staz}
\end{equation}
\end{corollary}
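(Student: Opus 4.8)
The plan is to show that the hypothesis $\lambda v=\mu c$ makes the time-dependence in the exponent of (\ref{densnewp}) disappear, after which the claimed form is pure algebraic rearrangement. First I would note that $\lambda v=\mu c$ is equivalent to $v/\mu=c/\lambda$; since $s=(c+v)/(\lambda+\mu)$, writing $c+v=c(\lambda+\mu)/\lambda$ gives $s=c/\lambda=v/\mu$. Hence $\mu-v/s=\mu-\mu=0$, so the exponent $(\mu-\tfrac vs)t-\tfrac1s x$ appearing in (\ref{densnewp}) collapses to $-x/s$, and (\ref{densnewp}) reduces to $p(x,t)=\frac{p}{1-p}\,\frac1s\,\frac{{\rm e}^{-x/s}}{[1+\frac{p}{1-p}{\rm e}^{-x/s}]^2}$, which no longer involves $t$.

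Next I would set $m=s\ln(p/(1-p))$, so that ${\rm e}^{m/s}=p/(1-p)$ and therefore $\frac{p}{1-p}{\rm e}^{-x/s}={\rm e}^{-(x-m)/s}$. Substituting this identity into the previous display yields at once $p(x,t)=\dfrac{{\rm e}^{-(x-m)/s}}{s[1+{\rm e}^{-(x-m)/s}]^2}$ on $-vt<x<ct$, which is exactly the truncated logistic density in the statement, with the asserted $m$ and with $s=v/\mu=c/\lambda$.

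Finally, for the stationary law I would let $t\to+\infty$. Since the expression just obtained does not depend on $t$ over the (expanding) support $(-vt,ct)$, its pointwise limit over all of $\mathbb{R}$ is the logistic density (\ref{staz}); it remains only to observe that no probability mass is left stranded at the moving endpoints, and this follows from Proposition \ref{prop:discresp}, because $\Prob\{S_t=ct\}$ and $\Prob\{S_t=-vt\}$ both tend to $0$ as $t\to+\infty$. I expect no genuine obstacle here: the only points needing a little care are the bookkeeping of the exponent and recalling that the logistic density on $\mathbb{R}$ integrates to $1$, which is precisely what makes the vanishing of the two atoms consistent with conservation of total mass.
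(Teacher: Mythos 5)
Your computation is correct and is exactly the ``straightforward calculation'' the paper alludes to: the condition $\lambda v=\mu c$ gives $s=c/\lambda=v/\mu$, killing the $t$-dependence in the exponent of (\ref{densnewp}), and the substitution $m=s\ln\bigl(p/(1-p)\bigr)$ yields the truncated logistic form, with the stationary density following since the expression is $t$-free on the expanding support (your added check that the atoms from Proposition \ref{prop:discresp} vanish as $t\to+\infty$ is a nice consistency remark). No gaps; this matches the paper's intended argument.
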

\par 
We note that the right-hand-side of Eq.\ (\ref{staz}) is a
logistic density with mean $m$ and variance $\pi^{2}s^{2}/3$. In
addition, if $p=1/2$ then the mean $m$ vanishes, and the density
identifies with the stationary p.d.f.\ of a damped telegraph process,
as obtained in Corollary 3.3 of Di Crescenzo and Martinucci
\cite{DiCrMa2010}. We remark that if $\lambda\,v\neq\mu\,c$ then
$$
 \lim_{t\to +\infty}p(x,t)=0, \qquad x \in \mathbb{R}.
$$
%%%%%%%%%%%%%%%%%%%%%%%%
\par
Finally, let us obtain the mean velocity conditioned by positive initial
velocity. 
\begin{proposition}\label{th:mediaV-2}
Under the assumptions of Proposition \ref{prop:discresp}, 
for all $t> 0$ we have
$$
\E [V_{t}\,|\,V_{0}=c]
 =c\,{\rm e}^{-\lambda t}+ 
 c\pi_A\, \sum_{k = 1}^{+\infty}\phi_k(t\,|\,c)
 +\,(-v)(1-\pi_A)\,\sum_{k = 1}^{+\infty}\psi_k(t\,|\,c),
$$
where 
\begin{equation*}
\begin{split}
\phi_k(t\,|\,c)&=\lambda \sum_{j = 0}^{k-1}(j+1)
{k-1 \choose j} p^j(1-p)^{k-1-j} 
\sum_{\ell=0}^{k-j-1}\sum_{h=0}^j
{k-j-1 \choose \ell }{j \choose h}(-1)^{\ell+h}
\\
& \times \left[t {\rm e}^{-\mu\ell t} \,{}_1F_1(1,2;[\mu\ell-\lambda(h+1)]t)\right.
\\
&\left. -\lambda(k+1)\,{\rm e}^{-\lambda(h+1) t}\, H(\mu\ell-\lambda(h+1), \lambda(k-h);t)\right],
\\
\psi_k(t\,|\,c)&=\lambda \sum_{j = 0}^{k-1}(j+1){k-1 \choose j} p^j(1-p)^{k-1-j} 
\sum_{\ell=0}^{k-j-1}\sum_{h=0}^j {k-j-1 \choose \ell }{j \choose h}(-1)^{\ell+h}
\\
& \times \left[t{\rm e}^{-\mu\ell t} \, {}_1F_1(1,2;[\mu\ell-\lambda(h+1)]t)\right.
\\
&\left. -\mu(k+1)\,{\rm e}^{-\lambda(h+1) t}\, H(\mu\ell-\lambda(h+1), \mu(k+1)-\lambda(h+1);t)\right],
\end{split}
\end{equation*}
and where the function $H$ is defined by (\ref{funcH}) in Appendix \ref{formulas}.
\end{proposition}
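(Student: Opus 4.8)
The plan is to specialise Proposition~\ref{th-mediaV} to the distributional assumptions~(\ref{distrnew1}). By~(\ref{distrnew1}) the leading term $c\,\overline{F}_{U_1}(t)$ equals $c\,{\rm e}^{-\lambda t}$, so everything reduces to evaluating $\phi_k(t\,|\,c)$ and $\psi_k(t\,|\,c)$ of~(\ref{phi-gen}) in closed form. Using~(\ref{FTk|Z0}) and~(\ref{FsumUD}) I would write
$$
 F_{T_k|Z_0}(u\,|\,c)=\sum_{j=0}^{k-1}\Prob\{N_{k-1}=j\,|\,Z_0=c\}\int_0^u F_D^{(k-j-1)}(u-w)\,f_U^{(j+1)}(w)\,{\rm d}w,
$$
and then substitute the Bernoulli law $\Prob\{N_{k-1}=j\,|\,Z_0=c\}=\binom{k-1}{j}p^{\,j}(1-p)^{k-1-j}$ (independence of the $X_n$; cf.\ Appendix~\ref{appendix-N}), together with $f_U^{(j+1)}(w)=\lambda(j+1)(1-{\rm e}^{-\lambda w})^{j}{\rm e}^{-\lambda w}$ from~(\ref{dens2}) and $F_D^{(k-j-1)}(v)=(1-{\rm e}^{-\mu v})^{k-j-1}$ from~(\ref{cum-distrib2}).

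Expanding the two powers $(1-{\rm e}^{-\mu(u-w)})^{k-j-1}$ and $(1-{\rm e}^{-\lambda w})^{j}$ by the binomial theorem produces exactly the finite sums over $\ell\in\{0,\dots,k-j-1\}$ and $h\in\{0,\dots,j\}$ of the statement, and reduces each remaining $w$-integral to the elementary $\int_0^u{\rm e}^{-\mu\ell(u-w)}{\rm e}^{-\lambda(h+1)w}\,{\rm d}w=u\,{\rm e}^{-\mu\ell u}\,{}_1F_1(1,2;[\mu\ell-\lambda(h+1)]u)$, since ${}_1F_1(1,2;z)=({\rm e}^{z}-1)/z$; this yields the first summand, which $\phi_k$ and $\psi_k$ share because it is the $F_{T_k|Z_0}$ term. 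For the second summand I would use $f_{U_{k+1}}(s)=\lambda(k+1){\rm e}^{-\lambda(k+1)s}$ and $f_{D_{k+1}}(s)=\mu(k+1){\rm e}^{-\mu(k+1)s}$ from~(\ref{distrnew1}). For $\phi_k$ a change of variables rewrites $\int_0^t F_{T_k|Z_0}(t-s\,|\,c)f_{U_{k+1}}(s)\,{\rm d}s$ as $\sum_j\binom{k-1}{j}p^{\,j}(1-p)^{k-1-j}\int_0^t F_D^{(k-j-1)}(t-y)\,f_{U^{(j+1)}+U_{k+1}}(y)\,{\rm d}y$ (the form of Remark~\ref{remarkFT}); since $\lambda(k+1)\neq\lambda(h+1)$ for all $h\le j\le k-1$, the convolution $f_{U^{(j+1)}+U_{k+1}}$ is a finite combination of ${\rm e}^{-\lambda(h+1)y}$ and ${\rm e}^{-\lambda(k+1)y}$, and integrating it against the $\ell$-expansion of $F_D^{(k-j-1)}(t-y)$ collapses the contribution into $-\lambda(k+1){\rm e}^{-\lambda(h+1)t}H(\mu\ell-\lambda(h+1),\lambda(k-h);t)$, with $H$ the function of~(\ref{funcH}). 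For $\psi_k$ it is cleaner to convolve the closed form of $F_{T_k|Z_0}$ just obtained directly with $f_{D_{k+1}}$, which reduces to the integrals $\int_0^t{\rm e}^{-\mu\ell(t-s)}{\rm e}^{-\mu(k+1)s}\,{\rm d}s$ and $\int_0^t{\rm e}^{-\lambda(h+1)(t-s)}{\rm e}^{-\mu(k+1)s}\,{\rm d}s$; this explains why there the second argument of $H$ becomes $\mu(k+1)-\lambda(h+1)$. Substituting the resulting expressions into~(\ref{mediaV}) gives the statement.

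The binomial expansions and the integrations are routine; the main obstacle is the bookkeeping of the nested sums over $j,\ell,h$, i.e.\ verifying that the convolution contributions recombine into a \emph{single} copy of $H$ with exactly the asserted arguments — in particular that the factor $1/(k-h)$ produced when integrating $f_{U^{(j+1)}+U_{k+1}}$ combines with the $\lambda(k+1)$ coming from $f_{U_{k+1}}$ according to the normalisation of~(\ref{funcH}), and likewise for $\psi_k$. One should also check that the only coincidences of exponents that can occur ($\mu\ell=\lambda(h+1)$, $\mu\ell=\lambda(k+1)$, or $\mu(k+1)=\lambda(h+1)$ for special values of $\lambda,\mu$) are the harmless continuous-extension cases already incorporated in ${}_1F_1(1,2;\cdot)$ and in $H$.
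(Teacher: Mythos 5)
Your proposal is correct and follows essentially the same route as the paper: you derive the same closed form of $F_{T_k|Z_0}(t\,|\,c)$ by binomial expansion of the generalized exponential laws (this is precisely the paper's ``after some calculations'' step), and then evaluate the two convolution integrals of (\ref{phi-gen}), recognizing the function $H$ of (\ref{funcH}). Your detour through Remark \ref{remarkFT} for $\phi_k$ (convolving $f_{U^{(j+1)}}$ with $f_{U_{k+1}}$ first) is only a cosmetic variant of the paper's direct computation and does recombine, as you suspected, into the single term $-\lambda(k+1)\,{\rm e}^{-\lambda(h+1)t}H(\mu\ell-\lambda(h+1),\lambda(k-h);t)$.
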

\begin{proof} 
The above formulas easily follow from Theorem \ref{th-mediaV}.
Indeed, after some calculations, from (\ref{FTk|Z0}) and
(\ref{FsumUD}) we find the following cumulative distribution function:
\begin{equation*}
\begin{split}
F_{T_k|Z_0}(t\,|\,c) & = \lambda \sum_{j = 0}^{k-1}(j+1)
{k-1 \choose j} p^j(1-p)^{k-1-j} 
\sum_{\ell=0}^{k-j-1}\sum_{h=0}^j
{k-j-1 \choose \ell }{j \choose h}(-1)^{\ell+h} \\
&\times  t {\rm e}^{-\mu\ell t} \,{}_1F_1(1,2;[\mu\ell-\lambda(h+1)]t),
\end{split}
\end{equation*}
where ${}_1F_1(1,2;0)=1$ and   ${}_1F_1(1,2;z)=({\rm e}^z-1)/z$, for $z\neq 0$. 
Moreover, the thesis follows by computing the two integrals which 
appear in the right-hand-sides of Eqs.\ (\ref{phi-gen}).
\end{proof}
%
%============================================================================
\section{Particular case for P\'olya scheme $(A>0)$}
\label{polya-case}
%============================================================================
In this section we consider a special case in which the velocity changes are governed 
by the P\'olya urn scheme. Let us assume that the distributions of $U_1$ and $D_1$ are
$\Gamma\left(\frac{b}{A}+1,\lambda\right)$ and $\Gamma\left(\frac{r}{A}+1,\mu\right)$, 
respectively, and the intertimes $U_k$ and $D_k$, for $k\geq 2$, are exponential with 
parameters $\lambda$ and $\mu$, respectively.  Therefore, $U^{(k)}$ has Gamma
distribution $\Gamma\left(\frac{b}{A}+k,\lambda\right)$ and $D^{(k)}$ has Gamma 
distribution $\Gamma\left(\frac{r}{A}+k,\mu\right)$, so that  
\begin{equation}
 f^{(k)}_{U}(t)=
 \frac{\lambda^{b/A+k}\,t^{b/A+k-1}\,{\rm e}^{-\lambda t}}{\Gamma\left(\frac{b}{A}+k\right)},
 \qquad f^{(k)}_{D}(t)=\frac{\mu^{r/A+k}\,t^{r/A+k-1}\,{\rm e}^{-\mu t}}
 {\Gamma\left(\frac{r}{A}+k\right)},\qquad t>0.
 \label{distr2}
\end{equation}
Notice that $U_1$ (resp., $D_1$) is stochastically larger than $U_k$ (resp., $D_k$), 
$k\geq 2$. Hence, the first time interval along both directions is stochastically greater than 
the other ones in the same direction. This is not an unusual assumption, since in renewal 
theory the distribution of the first interarrival time is often supposed different from that of 
the other ones (see Chapter 2 of Wolff \cite{Wo89}, for instance), as for the delayed 
renewal processes. Moreover, differently from the case treated in the previous section, 
$U_1$ (resp., $D_1$) depends on $b$ and $A$ (resp., $r$ and $A$), which are the 
same parameters involved in the P\'olya urn scheme described in Section 2. 
\par 
Under the above assumptions we are able to explicitly obtain the probability law of $S_{t}$ 
in terms of the hypergeometric function $_{1}F_{1}(u,v;z)$ (see Eq.\ (\ref{hypergeom-func}) 
in Appendix \ref{formulas}). We first provide the tail distribution functions of $U^{(k)}$ and
$D^{(k)}$ (see Eq.\ (\ref{cum-distr-func-gamma}) in Appendix \ref{formulas}):
\begin{equation}\label{surv-polya-gamma-U}
\overline{F}^{(k)}_{U}(t)=
1-\frac{(\lambda t)^{b/A+k}{\rm e}^{-\lambda t}}{\Gamma\left(\frac{b}{A}+k+1\right)}
 {_{1}}F_{1}\left(1,\frac{b}{A}+k+1;\lambda t\right), \qquad t\geq 0,
\end{equation}
\begin{equation}\label{surv-polya-gamma-D}
\overline{F}^{(k)}_{D}(t)=
1-\frac{(\mu t)^{r/A+k}{\rm e}^{-\mu t}}{\Gamma\left(\frac{r}{A}+k+1\right)}
 {_{1}}F_{1}\left(1,\frac{r}{A}+k+1; \mu t\right),\qquad t\geq 0\,.
\end{equation}
\par
In the following proposition we give the discrete component of the probability law of $S_t$. 
\begin{proposition}\label{polya-discr}
Let $U_1$ and $D_1$ be gamma distributed with parameters 
$\left(\frac{b}{A}+1,\lambda\right)$ and $\left(\frac{r}{A}+1,\mu\right)$, 
respectively, and let $U_k$ and $D_k$, for $k\geq 2$, be exponentially distributed 
with parameters $\lambda$ and $\mu$, respectively. For all $t>0$ we have 
\begin{equation}\label{Prob1-polya}
\Prob\{S_{t}=ct\}=
\frac{b}{b+r}\overline{F}_{U_1}(t)
+
\frac{b\left(\lambda t\right)^{b/A} {\rm e}^{-\lambda t}}
{(b+r)\,\Gamma\left(\frac{b+A}{A}\right)}
\left[_{1}F_{1}\left(1,\frac{b+A+r}{A};\lambda t\right)-1\right]\,,
\end{equation}
where $\overline{F}_{U_1}(t)$ is given by (\ref{surv-polya-gamma-U})
for $k=1$. Similarly,
\begin{equation}\label{Prob1-neg-polya}
\Prob\{S_{t}=-vt\}=
\frac{r}{b+r}\overline{F}_{D_1}(t)
+
\frac{r \left(\mu t\right)^{r/A} {\rm e}^{-\mu t}}
{(b+r)\,\Gamma\left(\frac{r+A}{A}\right)}
\left[_{1}F_{1}\left(1,\frac{b+A+r}{A};\mu t\right)
-1\right]\,,
\end{equation}
where $\overline{F}_{D_1}(t)$ is given by (\ref{surv-polya-gamma-D}) for $k=1$.
\end{proposition}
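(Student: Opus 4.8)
The plan is to read off the statement directly from Proposition \ref{prop:discr}, which already expresses the atom at $ct$ as
$$
\Prob\{S_{t}=ct\}=\frac{b}{b+r}\,\overline{F}_{U_{1}}(t)+\sum_{k=1}^{+\infty}\Prob\{Z_0=c,N_{k}=k\}\big[F_U^{(k)}(t)-F_U^{(k+1)}(t)\big],
$$
with $\Prob\{Z_0=c,N_{k}=k\}=\frac{b}{b+r}\left(\frac{b+A}{A}\right)_k\big[\left(\frac{b+A+r}{A}\right)_k\big]^{-1}$ in the P\'olya case. The task then splits into two elementary steps: (i) evaluate the increment $F_U^{(k)}(t)-F_U^{(k+1)}(t)$ in closed form under the present distributional assumptions, and (ii) recognise the resulting power series in $\lambda t$ as a confluent hypergeometric function. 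The companion identity (\ref{Prob1-neg-polya}) for $\Prob\{S_t=-vt\}$ will then follow by the obvious symmetry $b\leftrightarrow r$, $\lambda\leftrightarrow\mu$, $U\leftrightarrow D$, applied to (\ref{Prob1-neg}).

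For step (i), I would use that for every $k\ge 1$ the intertime $U_{k+1}$ is exponential of rate $\lambda$, so, conditioning on $U^{(k)}$ and recalling the Gamma density (\ref{distr2}),
$$
F_U^{(k)}(t)-F_U^{(k+1)}(t)=\Prob\{U^{(k)}\le t<U^{(k)}+U_{k+1}\}=\int_0^t f^{(k)}_{U}(s)\,{\rm e}^{-\lambda(t-s)}\,{\rm d}s=\frac{\lambda^{b/A+k}{\rm e}^{-\lambda t}}{\Gamma\!\left(\frac{b}{A}+k\right)}\int_0^t s^{b/A+k-1}\,{\rm d}s.
$$
The trivial remaining integral gives $F_U^{(k)}(t)-F_U^{(k+1)}(t)=(\lambda t)^{b/A+k}{\rm e}^{-\lambda t}/\Gamma\!\left(\frac{b}{A}+k+1\right)$, a Poisson-type term. (One could instead subtract the two tail formulas (\ref{surv-polya-gamma-U}) at consecutive indices and invoke a contiguous relation for ${}_{1}F_{1}$, but the direct integration is shorter.)

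For step (ii), substitute this increment and the P\'olya weight into the series; writing $\Gamma\!\left(\frac{b}{A}+k+1\right)=\Gamma\!\left(\frac{b+A}{A}\right)\left(\frac{b+A}{A}\right)_k$, the two factors $\left(\frac{b+A}{A}\right)_k$ cancel and one is left with
$$
\sum_{k=1}^{+\infty}\Prob\{Z_0=c,N_k=k\}\big[F_U^{(k)}(t)-F_U^{(k+1)}(t)\big]=\frac{b}{b+r}\,\frac{(\lambda t)^{b/A}{\rm e}^{-\lambda t}}{\Gamma\!\left(\frac{b+A}{A}\right)}\sum_{k=1}^{+\infty}\frac{(\lambda t)^{k}}{\left(\frac{b+A+r}{A}\right)_k}.
$$
Since $(1)_k=k!$, the defining series of the confluent hypergeometric function, Eq.\ (\ref{hypergeom-func}), gives $\sum_{k\ge 0}(\lambda t)^k/\left(\frac{b+A+r}{A}\right)_k={}_{1}F_{1}\!\left(1,\frac{b+A+r}{A};\lambda t\right)$, so the inner sum equals that ${}_{1}F_{1}$ minus its $k=0$ term, namely minus $1$. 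This is precisely (\ref{Prob1-polya}), and (\ref{Prob1-neg-polya}) follows by the symmetry noted above.

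There is no real obstacle here: the genuinely probabilistic content is already contained in Proposition \ref{prop:discr}, and the only points needing a little care are the bookkeeping of the Pochhammer symbols against the Gamma normalisation of the density in step (ii), and the interchange of summation with the passage to closed form, which is justified by the absolute convergence of $\sum_k(\lambda t)^k/\left(\frac{b+A+r}{A}\right)_k$ for each fixed $t>0$.
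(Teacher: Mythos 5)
Your proposal is correct and follows essentially the same route as the paper: it starts from Proposition \ref{prop:discr} with the P\'olya weights, writes $F_U^{(k)}(t)-F_U^{(k+1)}(t)=\int_0^t f_U^{(k)}(s)\overline{F}_{U_{k+1}}(t-s)\,{\rm d}s$, evaluates this using the Gamma density (\ref{distr2}) and the exponential tail, and resums the series into ${}_{1}F_{1}\bigl(1,\tfrac{b+A+r}{A};\lambda t\bigr)-1$, with the $-vt$ case obtained by the symmetry $b\leftrightarrow r$, $\lambda\leftrightarrow\mu$, $U\leftrightarrow D$. The only cosmetic difference is that you evaluate the convolution integral in closed form before summing, while the paper carries the $(1)_k/k!$ factors inside the sum; the bookkeeping with the Pochhammer symbols is handled correctly.
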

\begin{proof}
Eq.\ (\ref{Prob1-polya}) follows from (\ref{Prob1}) using (\ref{distr2}). Indeed, since
%  (si potrebbe anche sfruttare l'identit\`a 
%  $F_U^{(k)}(t)-F_U^{(k+1)}(t)= \overline  F_U^{(k+1)}(t)- \overline F_U^{(k)}(t)$ ?)
%
\begin{equation*}
F_U^{(k)}(t)-F_U^{(k+1)}(t)=
{\int^{t}_{0}}f^{(k)}_{U}(s)\overline{F}_{U_{k+1}}(t-s)\,{\rm d}s\,,
\end{equation*}
we obtain 
\begin{equation*}
\begin{split}
\Prob\{S_{t}=ct\}&=
\frac{b}{b+r}\overline{F}_{U_1}(t)
+
\sum_{k = 1}^{+\infty}
\Prob\{Z_0=c, N_{k}=k\} 
{\int^{t}_{0}} \lambda^{b/A+k} 
\frac{s^{b/A+k-1}{\rm e}^{-\lambda s}}{\Gamma\left(\frac{b}{A}+k\right)}
{\rm e}^{-\lambda(t-s)}{\rm d}s\\
&=
\frac{b}{b+r}\overline{F}_{U_1}(t)
+
\frac{b{\rm e}^{-\lambda t}\left(\lambda t\right)^{b/A}}
{(b+r)\Gamma\left(\frac{b+A}{A}\right)}
\sum_{k = 1}^{+\infty}
\frac{(1)_k}{\left(\frac{b+A+r}{A}\right)_k}
\frac{\left(\lambda t\right)^{k}}{k!}
\\
&=
\frac{b}{b+r}\overline{F}_{U_1}(t)
+
\frac{b{\rm e}^{-\lambda t}\left(\lambda t\right)^{b/A}}
{(b+r)\Gamma\left(\frac{b+A}{A}\right)}
\left[_{1}F_{1}\left(1,\frac{b+A+r}{A};\lambda t\right)
-1\right]\,.
\end{split}
\end{equation*}
By interchanging $\lambda$ with $\mu$, $b$ with $r$, and $U$ with $D$ 
in (\ref{Prob1-polya}), we immediately obtain (\ref{Prob1-neg-polya}). 
\end{proof}
\par
For the absolutely continuous component of the probability law of $S_t$ we have the following results.
\begin{theorem}\label{polya-absol}
Under the assumptions of Proposition \ref{polya-discr}, for all $t>0$ and $-vt < x < ct$, we have
\begin{equation}
 f(x,t\,|\,c)=\frac{\xi(\tau_*,t)\eta(\tau_*,t)}{t-\tau_*}
 \label{density1}
\end{equation}
and 
\begin{equation}
 b(x,t\,|\,c)=
 \frac{r\lambda(\lambda\tau_*)^{b/A-1}{\rm e}^{-\lambda \tau_{*}}}
 {(c+v)A\Gamma\left(\frac{b+A}{A}\right)}
 \overline{F}_{D_1}(t-\tau_*)
 \left[_{1}F_{1}\left(1,\frac{b+A+r}{A};\lambda\tau_{*}\right)-1\right]
 +\frac{\xi(\tau_*,t)\eta(\tau_*,t)}{\tau_{*}},
 \label{density2}
\end{equation}
where $\tau_{*}$ is defined in (\ref{tau}), and where 
\begin{equation}
 \xi(\tau_*,t):=\exp\left\{-\lambda \tau_{*}-\mu(t-\tau_{*})\right\}
 \frac{(\lambda\tau_*)^{b/A+1}\big[\mu(t-\tau_*)\big]^{r/A}}
 {(c+v)\,\Gamma\left(\frac{b+A}{A}\right)\Gamma\left(\frac{r}{A}\right)}\,,
 \label{xi}
\end{equation}
\begin{equation}
\begin{split}
\eta(\tau_*,t)&:=
\left[\lambda\tau_*+\mu(t-\tau_*)\right]^{-1}
\left[
_{1}F_{1}\left(1,\frac{b+A+r}{A}; \lambda\tau_*+\mu(t-\tau_*)\right)
-1-\frac{A\left[\lambda\tau_*+\mu(t-\tau_*)\right]}{b+A+r}\right] 
\\
&-(\lambda\tau_*)^{-1}
 \left[_{1}F_{1}\left(1,\frac{b+A+r}{A}; \lambda\tau_*\right)
 -1-\frac{A\lambda\tau_*}{b+A+r}\right]\,.
\label{eta}
\end{split}
\end{equation}
\end{theorem}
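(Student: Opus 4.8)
The plan is to expand $f(x,t\,|\,c)$ and $b(x,t\,|\,c)$ via (\ref{density-sum}) and to substitute into the general formulas (\ref{Prob2}) and (\ref{Prob3}) of Theorem~\ref{Theor1} the Gamma densities (\ref{distr2}), the exponential tails $\overline{F}_{U_{j+2}}(t-s)={\rm e}^{-\lambda(t-s)}$ and $\overline{F}_{D_{k-j}}(t-s)={\rm e}^{-\mu(t-s)}$ (legitimate because the indices $j+2$ and $k-j$ are always at least $2$ over the relevant ranges, where $U_k,D_k$ are exponential), together with the P\'olya weights (\ref{formula-polya2})--(\ref{formula-polya3}).

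First I would dispose of the inner integrals. In (\ref{Prob2}), the substitution $u=s-t+\tau_*$ turns the integral into ${\rm e}^{-\lambda\tau_*}\int_0^{\tau_*}f^{(j+1)}_U(u){\rm e}^{\lambda u}\,{\rm d}u$; the essential point is that the exponential factor in the Gamma density $f^{(j+1)}_U$ cancels, leaving a pure power, so the integral equals $(\lambda\tau_*)^{b/A+j+1}/\Gamma\!\big(\tfrac bA+j+2\big)$. The analogous step in (\ref{Prob3}) produces $\big(\mu(t-\tau_*)\big)^{r/A+k-j-1}/\Gamma\!\big(\tfrac rA+k-j\big)$.

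The main simplification is then that the Gamma functions created in these denominators cancel exactly against the Pochhammer symbols carried by the P\'olya weights, through $(\tfrac bA)_m=\Gamma(\tfrac bA+m)/\Gamma(\tfrac bA)$ and $(\tfrac rA)_m=\Gamma(\tfrac rA+m)/\Gamma(\tfrac rA)$. After this cancellation, up to the prefactor $\xi(\tau_*,t)$ of (\ref{xi}), the $j$-summand of $f_k(x,t\,|\,c)$ (and likewise of the ``$j$-sum'' part of $b_k(x,t\,|\,c)$) is $\binom{k-1}{j}(\lambda\tau_*)^j\big(\mu(t-\tau_*)\big)^{k-1-j}$ divided by $\big(\tfrac{b+A+r}{A}\big)_k$; since the sum runs only up to $j=k-2$, the binomial theorem gives $\sum_{j=0}^{k-2}\binom{k-1}{j}(\lambda\tau_*)^j\big(\mu(t-\tau_*)\big)^{k-1-j}=\big(\lambda\tau_*+\mu(t-\tau_*)\big)^{k-1}-(\lambda\tau_*)^{k-1}$. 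Finally, summing over $k\geq 2$ and recognising the confluent hypergeometric series, namely $\sum_{k\geq 2}x^{k-1}/(C)_k=\tfrac1x\big[{}_1F_1(1,C;x)-1-x/C\big]$ with $C=\tfrac{b+A+r}{A}$ (just subtract the $k=0,1$ terms from ${}_1F_1(1,C;x)=\sum_{k\geq 0}x^k/(C)_k$), evaluated at $x=\lambda\tau_*+\mu(t-\tau_*)$ and at $x=\lambda\tau_*$ and subtracted, reproduces exactly $\eta(\tau_*,t)$ of (\ref{eta}). This yields (\ref{density1}) and the second summand of (\ref{density2}).

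The first summand of (\ref{density2}) comes from the isolated term of (\ref{Prob3}) with $N_{k-1}=k-1$, $Z_k=-v$ --- a maximal forward run followed by a single backward step --- in which $\overline{F}_{D_1}(t-\tau_*)$ stays unevaluated (since $D_1$ is Gamma, cf.\ (\ref{surv-polya-gamma-D})). Here one inserts $f^{(k)}_U(\tau_*)$, simplifies the P\'olya weight using $(\tfrac bA+1)_{k-1}/\Gamma(\tfrac bA+k)=1/\Gamma(\tfrac bA+1)$, and sums the resulting series in $k$, which again is ${}_1F_1\big(1,\tfrac{b+A+r}{A};\lambda\tau_*\big)-1$, producing the stated term. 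The real work is purely bookkeeping: aligning the Gamma and Pochhammer parameters, the index shifts, and the exponents of $\lambda\tau_*$ and $\mu(t-\tau_*)$ so that the two $k$-series collapse into the compact $\eta$, while keeping in mind that $f_1\equiv 0$ and that $b_1$ contributes only the $\overline{F}_{D_1}$ term. I expect this bookkeeping, together with getting the $j$-range (and hence the subtracted power $(\lambda\tau_*)^{k-1}$) right, to be the only genuine obstacle.
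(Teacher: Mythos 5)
Your proposal is correct and follows essentially the same route as the paper's proof: substitute the Gamma densities, exponential tails and P\'olya weights into (\ref{Prob2})--(\ref{Prob3}), exploit the cancellation of exponentials inside the integrals to get pure powers, cancel the resulting Gamma factors against the Pochhammer symbols, apply the binomial theorem to the truncated $j$-sum (yielding the subtracted $(\lambda\tau_*)^{k-1}$), and resum the $k$-series into the confluent hypergeometric expressions $\eta$ and ${}_1F_1(1,\tfrac{b+A+r}{A};\lambda\tau_*)-1$, treating the $N_{k-1}=k-1$ term of $b_k$ separately with $\overline{F}_{D_1}$ left intact. All the key identities you invoke (the integral evaluation, $(\tfrac{b}{A}+1)_{k-1}/\Gamma(\tfrac{b}{A}+k)=1/\Gamma(\tfrac{b}{A}+1)$, and the tail-of-series formula for ${}_1F_1$) check out, so only the bookkeeping you already flagged remains.
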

\begin{proof} 
Due to Eqs.\ (\ref{density-sum}), (\ref{Prob3}) and (\ref{formula-polya2}), 
and recalling (\ref{distr2}), under the given assumptions we have  
\begin{equation*}
\begin{split}
&f(x,t\,|\,c)= 
\frac{1}{c+v}\sum_{k = 2}^{+\infty}\sum_{j = 0}^{k-2}
\Prob\{N_{k-1}=j,Z_k=c\,|\,Z_0=c\}
\\
&\times
{\int^{t}_{t-\tau_{*}}}
\frac{\lambda^{b/A+j+1}(s-t+\tau_{*})^{b/A+j}}{\Gamma\left(\frac{b}{A}+j+1\right)}
{\rm e}^{-\lambda(s-t+\tau_{*})}
\frac{\mu^{r/A+k-j-1}(t-\tau_{*})^{r/A+k-j-2}}{\Gamma\left(\frac{r}{A}+k-j-1\right)}
{\rm e}^{-\mu(t-\tau_{*})}{\rm e}^{-\lambda(t-s)}{\rm d}s
\\
&=\frac{\xi(\tau_*,t)}{t-\tau_*}
\sum_{k = 2}^{+\infty}
\frac{1}{\left(\frac{b+A+r}{A}\right)_k}
\sum_{j = 0}^{k-2}
{k-1 \choose j}(\lambda\tau_*)^j
[\mu(t-\tau_*)]^{k-1-j}\\
&=\frac{\xi(\tau_*,t)}{t-\tau_*}
\sum_{k = 2}^{+\infty}
\frac{(1)_k}{\left(\frac{b+A+r}{A}\right)_k}\frac{1}{k!}
\left\{
\left[\lambda\tau_*+\mu(t-\tau_*)\right]^{k-1}-(\lambda\tau_*)^{k-1}
\right\}\\
&=\frac{\xi(\tau_*,t)}{t-\tau_*}
\left\{
\left[\lambda\tau_*+\mu(t-\tau_*)\right]^{-1}
\left[
_{1}F_{1}\left(1,\frac{b+A+r}{A}; \lambda\tau_*+\mu(t-\tau_*)\right)
-1-\frac{A\left[\lambda\tau_*+\mu(t-\tau_*)\right]}{b+A+r}
\right]\right.\\
&\left.-(\lambda\tau_*)^{-1}
\left[
_{1}F_{1}\left(1,\frac{b+A+r}{A}; \lambda\tau_*\right)
-1-\frac{A\lambda\tau_*}{b+A+r}\right]
\right\}\,.
\end{split}
\end{equation*}
Similarly, Eqs.\ (\ref{density-sum}), (\ref{Prob3}) and 
(\ref{formula-polya3}) give
\begin{equation*}
\begin{split}
b(x,t\,|\,c) &=
\frac{1}{c+v}\left\{
\sum_{k = 1}^{+\infty} \Prob\{N_{k-1}=k-1, Z_k=-v\,|\,Z_0=c\}
\frac{\lambda^{b/A+k}\tau_{*}^{b/A+k-1} {\rm e}^{-\lambda \tau_{*}}}
{\Gamma\left(\frac{b}{A}+k\right)}
\overline{F}_{D_1}(t-\tau_*)\right.
\\
&+\sum_{k = 2}^{+\infty}\sum_{j = 0}^{k-2}
\Prob\{N_{k-1}=j, Z_k=-v\,|\,Z_0=c\}
\\
& \left. \times \frac{\lambda^{b/A+j+1}\tau_{*}^{b/A+j}\mu^{r/A+k-j-1} 
{\rm e}^{-\lambda \tau_{*}-\mu(t-\tau_{*})}}
{\Gamma\left(\frac{b}{A}+j+1\right)\Gamma\left(\frac{r}{A}+k-j-1\right)}
\int_{\tau_*}^t (s-\tau_{*})^{r/A+k-j-2}
{\rm d}s\right\}\\
&=
\frac{r(\lambda\tau_*)^{b/A}}{A\Gamma\left(\frac{b+A}{A}\right)\tau_*(c+v)}
{\rm e}^{-\lambda \tau_{*}}\overline{F}_{D_1}(t-\tau_*)
\sum_{k = 1}^{+\infty}
\frac{(1)_k}{\left(\frac{b+A+r}{A}\right)_k}\frac{(\lambda\tau_{*})^{k}}{k!}
\\
&+\frac{\xi(\tau_*,t)}{\tau_{*}}
\sum_{k = 2}^{+\infty}
\frac{1}{\left(\frac{b+A+r}{A}\right)_k}
\sum_{j = 0}^{k-2}
{k-1 \choose j}(\lambda\tau_{*})^{j}[\mu(t-\tau_{*})]^{k-1-j}\\
&=
\frac{r(\lambda\tau_*)^{b/A}}{A\Gamma\left(\frac{b+A}{A}\right)\tau_*(c+v)}
{\rm e}^{-\lambda \tau_{*}}\overline{F}_{D_1}(t-\tau_*)
\left[_{1}F_{1}\left(1,\frac{b+A+r}{A};\lambda\tau_{*}\right)-1\right]
\\
&+\frac{\xi(\tau_*,t)}{\tau_{*}}
\sum_{k = 2}^{+\infty}
\frac{(1)_k}{\left(\frac{b+A+r}{A}\right)_k}\frac{1}{k!}
\left[
[\lambda\tau_{*}+\mu(t-\tau_{*})]^{k-1}-(\lambda\tau_*)^{k-1}
\right]\\
&=
\frac{r(\lambda\tau_*)^{b/A}}{A\Gamma\left(\frac{b+A}{A}\right)\tau_*(c+v)}
{\rm e}^{-\lambda \tau_{*}}\overline{F}_{D_1}(t-\tau_*)
\left[_{1}F_{1}\left(1,\frac{b+A+r}{A};\lambda\tau_{*}\right)-1\right] 
+\frac{\xi(\tau_*,t)}{\tau_{*}}\\
&\times\left\{
[\lambda\tau_{*}+\mu(t-\tau_{*})]^{-1}
\left[_{1}F_{1}\left(1,\frac{b+A+r}{A};\lambda\tau_{*}+\mu(t-\tau_{*})\right)
-1-\frac{A[\lambda\tau_{*}+\mu(t-\tau_{*})]}{b+A+r}
\right]
\right.\\
&\left. 
-(\lambda\tau_*)^{-1}
\left[_{1}F_{1}\left(1,\frac{b+A+r}{A};\lambda\tau_{*}\right)
-1-\frac{A(\lambda\tau_{*})}{b+A+r}
\right]
\right\}\,.
\end{split}
\end{equation*}
The proof is thus complete.
\end{proof}
\begin{corollary}\label{cor:pPol}
Under the assumptions of Theorem \ref{polya-absol}, for all $t>0$ and $-vt <x< ct$, 
the probability density of $S_t$ is given by Eq.\ (\ref{eq:defdensp}), where
\begin{equation}\label{p-pos}
p(x,t\,|\,c)=
\frac{r\lambda(\lambda\tau_*)^{b/A-1}{\rm e}^{-\lambda \tau_{*}}}
{(c+v)A\Gamma\left(\frac{b+A}{A}\right)}
\overline{F}_{D_1}(t-\tau_*)
\left[_{1}F_{1}\left(1,\frac{b+A+r}{A};\lambda\tau_{*}\right)-1\right]
+\frac{\xi(\tau_*,t)\eta(\tau_*,t)\,t}{\tau_{*}(t-\tau_*)}\,,
\end{equation}
\begin{equation}\label{p-neg}
\begin{split}
p(x,t\,|\,-v)&=
\frac{b\mu\big[\mu(t-\tau_*)\big]^{r/A-1}{\rm e}^{-\mu(t- \tau_{*})}}
{(c+v)A\Gamma\left(\frac{r+A}{A}\right)}
\overline{F}_{U_1}(\tau_*)
\left[_{1}F_{1}\left(1,\frac{b+A+r}{A};\mu(t-\tau_{*})\right)-1\right]\\
&+
\frac{\widetilde{\xi}(\tau_*,t)\widetilde{\eta}(\tau_*,t)\,t}
{\tau_*(t-\tau_{*})}\,,
\end{split}
\end{equation}
where $\tau_{*}$,  $\xi$ and $\eta$,  are defined respectively in 
Eqs.\ (\ref{tau}), (\ref{xi}) and (\ref{eta}), and where
\begin{equation*} 
\widetilde{\xi}(\tau_*,t):=\exp\left\{-\lambda \tau_{*}-\mu(t-\tau_{*})\right\}
\frac{(\lambda\tau_*)^{b/A}\big[\mu(t-\tau_*)\big]^{r/A+1}}
{(c+v)\Gamma\left(\frac{r+A}{A}\right)\Gamma\left(\frac{b}{A}\right)}\,,
\end{equation*}
\begin{equation*} 
\begin{split}
\widetilde{\eta}(\tau_*,t)&:=
\left[\lambda\tau_*+\mu(t-\tau_*)\right]^{-1}
\left[
_{1}F_{1}\left(1,\frac{b+A+r}{A}; \lambda\tau_*+\mu(t-\tau_*)\right)
-1-\frac{A\left[\lambda\tau_*+\mu(t-\tau_*)\right]}{b+A+r}\right] \\
&-[\mu(t-\tau_*)]^{-1}
\left[_{1}F_{1}\left(1,\frac{b+A+r}{A}; \mu(t-\tau_*)\right)
-1-\frac{A\mu(t-\tau_*)}{b+A+r}\right]\,.
\end{split}
\end{equation*}
\end{corollary}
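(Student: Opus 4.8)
The plan is to derive Corollary~\ref{cor:pPol} directly from Theorem~\ref{polya-absol} together with the structural decomposition~(\ref{densp})--(\ref{eq:defdensp}), exactly as Corollary~\ref{th:problaw1} is obtained from Theorem~\ref{th:densdamp}. First I would recall that, by~(\ref{densp}),
$$
p(x,t\,|\,c)=f(x,t\,|\,c)+b(x,t\,|\,c),
$$
and substitute the two expressions~(\ref{density1}) and~(\ref{density2}) supplied by Theorem~\ref{polya-absol}. The second summand of~(\ref{density2}) is $\xi(\tau_*,t)\eta(\tau_*,t)/\tau_*$, while~(\ref{density1}) contributes $\xi(\tau_*,t)\eta(\tau_*,t)/(t-\tau_*)$; adding these and using the elementary identity $\tfrac1{t-\tau_*}+\tfrac1{\tau_*}=\tfrac{t}{\tau_*(t-\tau_*)}$ yields the term $\xi(\tau_*,t)\eta(\tau_*,t)\,t/[\tau_*(t-\tau_*)]$ appearing in~(\ref{p-pos}); the remaining boundary term of~(\ref{density2}), involving $\overline F_{D_1}(t-\tau_*)$ and ${}_1F_1$, is carried over unchanged. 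This establishes~(\ref{p-pos}).

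Next I would obtain~(\ref{p-neg}) by the symmetry already exploited in the proof of Corollary~\ref{th:problaw1}: the roles of the forward and backward motions are interchanged by replacing $\tau_*$ with $t-\tau_*$, swapping $\lambda\leftrightarrow\mu$, $b\leftrightarrow r$, $c\leftrightarrow v$ is a relabelling that leaves $c+v$ invariant, and exchanging $U$ with $D$. Under this substitution $\xi(\tau_*,t)$ transforms into $\widetilde\xi(\tau_*,t)$ (the exponent $b/A+1$ on $\lambda\tau_*$ becomes an exponent $r/A+1$ on $\mu(t-\tau_*)$, and the Gamma factors adjust accordingly), $\eta$ transforms into $\widetilde\eta$ (note that the symmetric argument $\lambda\tau_*+\mu(t-\tau_*)$ is invariant, while the subtracted block becomes the one based at $\mu(t-\tau_*)$), and the boundary term picks up $\overline F_{U_1}(\tau_*)$ in place of $\overline F_{D_1}(t-\tau_*)$, with $b$ and $r$ swapped in the hypergeometric parameters. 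One must also remark that Theorem~\ref{polya-absol} was stated for initial velocity $c$; applying Remark~\ref{remark-density} (or, equivalently, re-running the computation in Theorem~\ref{polya-absol} with the two directions interchanged) legitimizes this symmetry step, since the model assumptions on $U_1,D_1$ in Proposition~\ref{polya-discr} are themselves symmetric under the same relabelling.

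The only genuinely delicate point — and hence the main obstacle — is bookkeeping the $\eta\leftrightarrow\widetilde\eta$ and $\xi\leftrightarrow\widetilde\xi$ correspondences correctly: one has to verify that the substitution does not spoil the common first bracket of $\eta$ (which it does not, because $\lambda\tau_*+\mu(t-\tau_*)$ is a symmetric function of the pair $(\lambda\tau_*,\mu(t-\tau_*))$) and that the prefactor exponents $b/A\pm1$ versus $r/A\pm1$ land in the right places; this is routine but error-prone, so I would carry it out by writing $f(x,t\,|\,-v)$ and $b(x,t\,|\,-v)$ out from Remark~\ref{remark-density} in the present Gamma/exponential setting and checking term-by-term against the displayed $\widetilde\xi,\widetilde\eta$. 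Finally I would note that~(\ref{eq:defdensp}) then assembles $p(x,t)$ from $p(x,t\,|\,c)$ and $p(x,t\,|\,-v)$ with weights $b/(b+r)$ and $r/(b+r)$, which completes the statement, the substitution into~(\ref{eq:defdensp}) being immediate and requiring no further argument.
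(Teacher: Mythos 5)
Your proposal matches the paper's own argument: Eq.~(\ref{p-pos}) is obtained exactly as you describe, by adding (\ref{density1}) and (\ref{density2}) via $\frac{1}{t-\tau_*}+\frac{1}{\tau_*}=\frac{t}{\tau_*(t-\tau_*)}$, and Eq.~(\ref{p-neg}) is obtained through the same symmetry swap ($\lambda\leftrightarrow\mu$, $\tau_*\leftrightarrow t-\tau_*$, $b\leftrightarrow r$, $U\leftrightarrow D$) applied via Remark~\ref{remark-density}, which sends $b(x,t\,|\,c)$ to $f(x,t\,|\,-v)$ and $f(x,t\,|\,c)$ to $b(x,t\,|\,-v)$. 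This is essentially the same proof as in the paper.
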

\begin{proof}
Eq.\  (\ref{p-pos}) immediately follows from (\ref{densp}). From Remark \ref{remark-density}, 
by interchanging $\mu$ with $\lambda$, $(t-\tau_*)$ with $\tau_*$, $b$ with $r$, 
and $D$ with $U$ in $b(x,t\,|\,c)$, given in (\ref{density2}), we get the following density:
\begin{equation*} 
\begin{split}
f(x,t\,|\,-v)&=
\frac{b\mu\big[\mu(t-\tau_*)\big]^{r/A-1}}
{(c+v)A\Gamma\left(\frac{r+A}{A}\right)}
{\rm e}^{-\mu(t- \tau_{*})}\overline{F}_{U_1}(\tau_*)
\left[_{1}F_{1}\left(1,\frac{b+A+r}{A};\mu(t-\tau_{*})\right)-1\right]\\
&+\frac{\widetilde{\xi}(\tau_*,t)\widetilde{\eta}(\tau_*,t)}{t-\tau_{*}}.
\end{split}
\end{equation*}
Similarly, we obtain the density $b(x,t\,|\,-v)$ by interchanging $\mu$ with $\lambda$, 
$(t-\tau_*)$ with $\tau_*$, $b$ with $r$ and $D$ with $U$ in $f(x,t\,|\,c)$, given in 
(\ref{density1}), so that 
$$
 b(x,t\,|\,-v)=
 \frac{\widetilde{\xi}(\tau_*,t)\widetilde{\eta}(\tau_*,t)}{\tau_*}\,.
$$
Hence, Eq.\ (\ref{p-neg}) follows by (\ref{densp}).
\end{proof}
\par
Various plots of density $p(x,t)$ given in Corollary \ref{cor:pPol}
are shown by Figures 6, 7 and 8. 
\par
%%%%%%%%%  FIG 6  %%%%%% 
\begin{figure}[t]
\centerline{
\epsfxsize=7.cm
\epsfbox{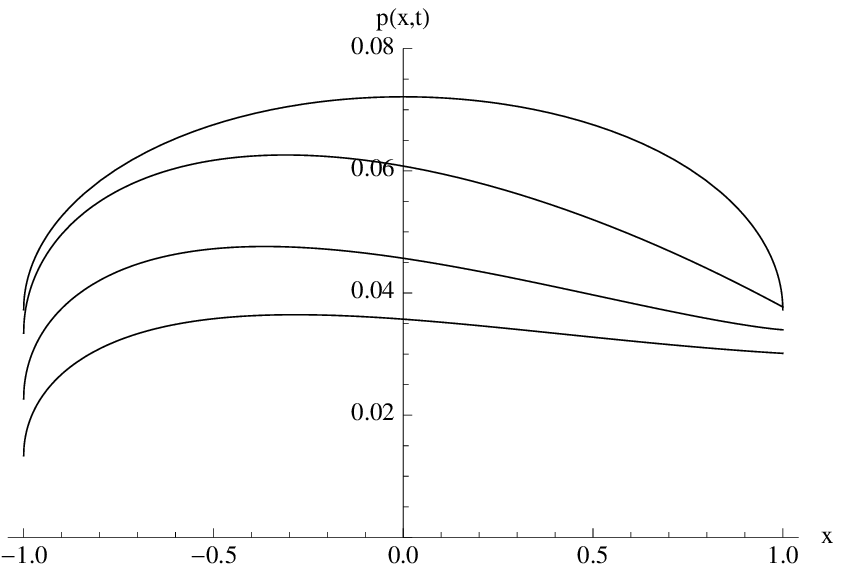}
\;
\epsfxsize=7.cm
\epsfbox{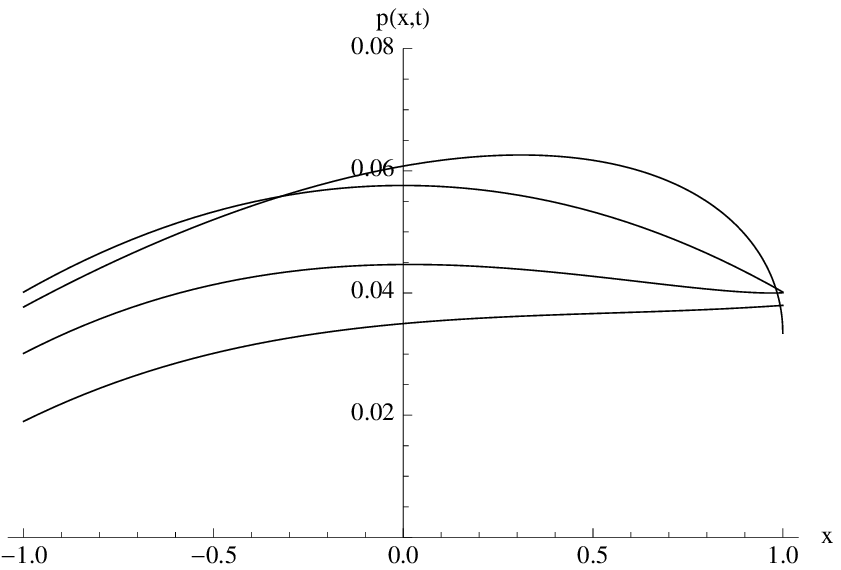}
}
\caption{
Density $p(x,t)$ for P\'olya scheme with densities (\ref{distr2}), with 
$t=1$, $c=v=1$, $\lambda=\mu=1$, $A=2$, for $b=1$ ({\em left}) and 
$b=2$ ({\em right}), and $r=1$, $2$, $3$, $4$ (from bottom to top near $x=0$).
}
\end{figure}
%%%%%%%%%%%%%%
%
%%%%%%%%%  FIG 7 %%%%%% 
\begin{figure}[t]
\centerline{
\epsfxsize=7.cm
\epsfbox{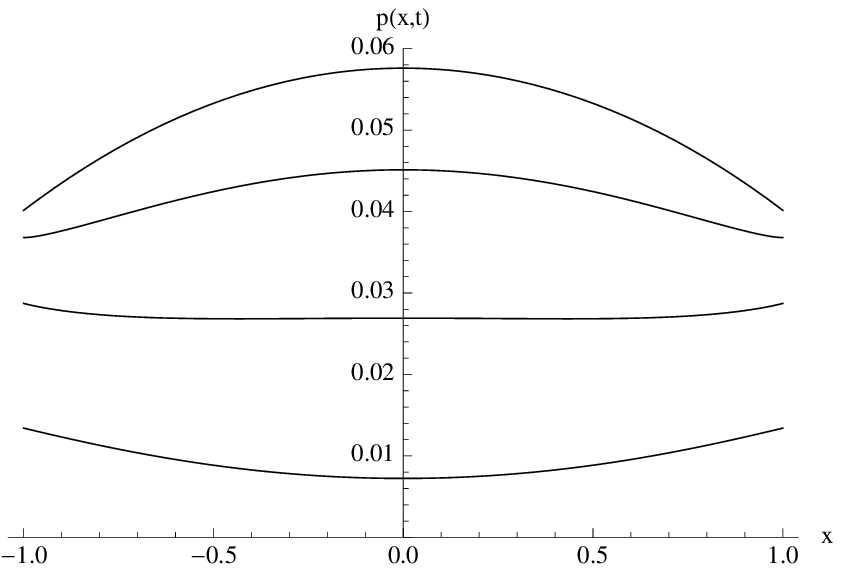}
\;
\epsfxsize=7.cm
\epsfbox{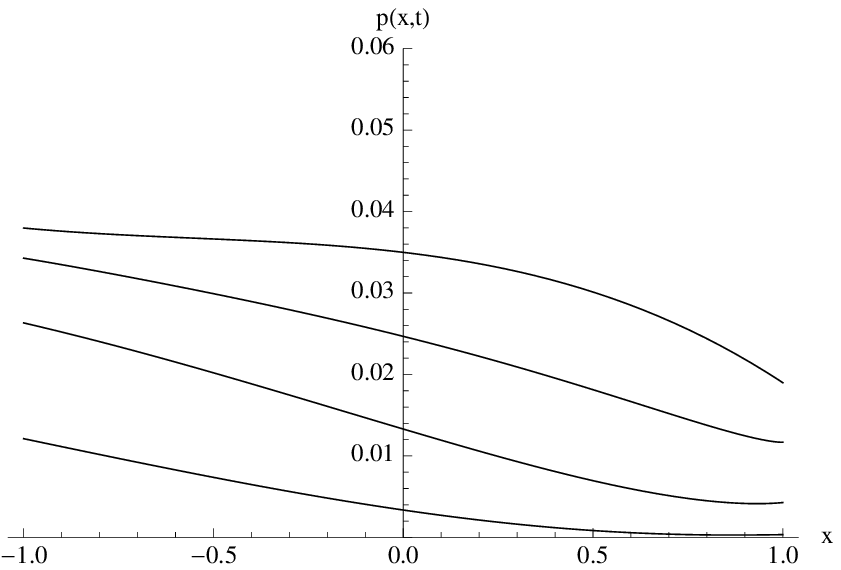}
}
\caption{
Same as Figure 6, with $r=1$ and  $A=0.4$, $0.6$, $0.8$, $1$ (from bottom to top near $x=0$).
}
\end{figure}
%%%%%%%%%%%%%%
%
%%%%%%%%%  FIG 8 %%%%%% 
\begin{figure}[t]
\centerline{
\epsfxsize=7.cm
\epsfbox{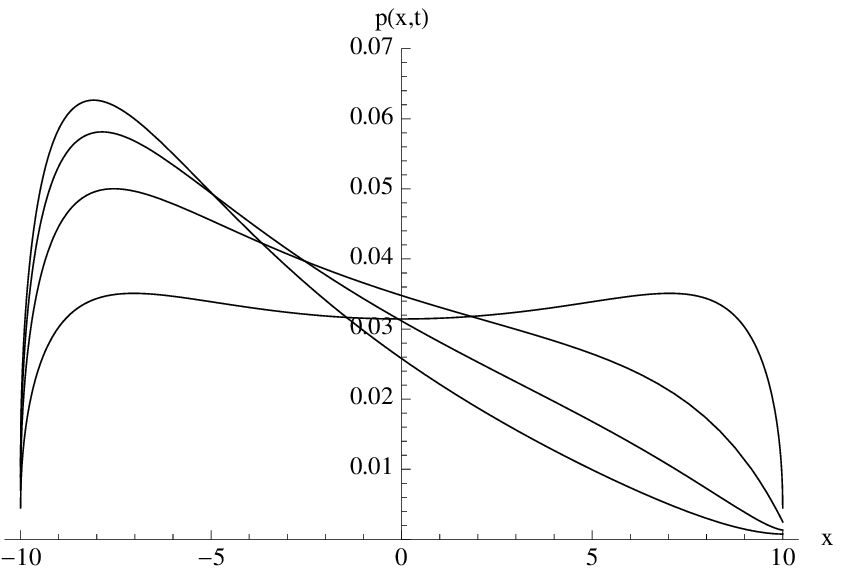}
\;
\epsfxsize=7.cm
\epsfbox{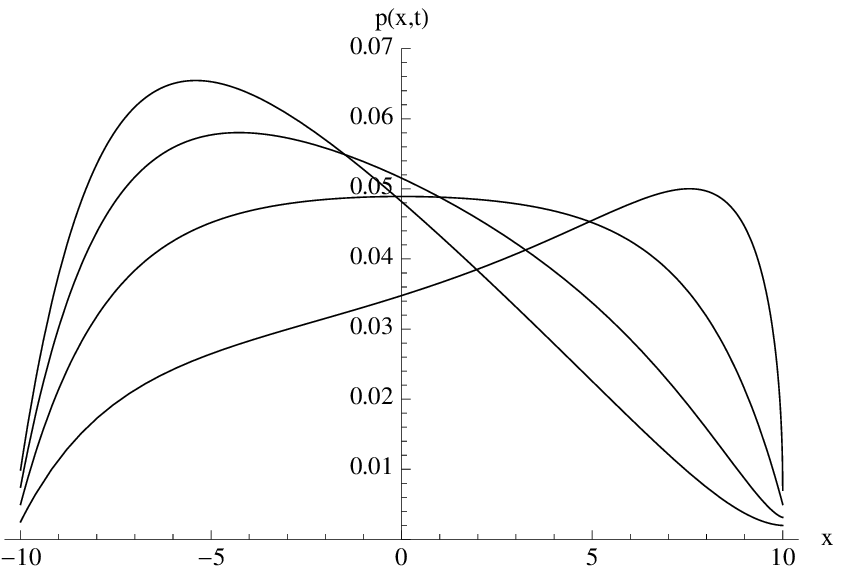}
}
\caption{
Same as Figure 6 (from bottom to top near $x=-8$), with $t=10$.
}
\end{figure}
%%%%%%%%%%%%%%
%
\begin{remark}  
\rm 
From the formulas given in Theorem \ref{polya-absol} we have  
\begin{equation*}
 \lim_{\tau_*\to 0} \frac{\xi(\tau_*,t)\eta(\tau_*,t)}{t-\tau_*}=0,
 \qquad
 \lim_{\tau_*\to t} \frac{\xi(\tau_*,t)\eta(\tau_*,t)}{t-\tau_*}=0,
\end{equation*}
$$
 \lim_{\tau_*\to 0} \frac{\xi(\tau_*,t)\eta(\tau_*,t) }{\tau_*}=0,
 \qquad 
 \lim_{\tau_*\to t} \frac{\xi(\tau_*,t)\eta(\tau_*,t) }{\tau_*}=0.
$$
Hence, some calculations allow to study the behaviour of the density $p(x,t)$ 
in proximity of the endpoints $-vt,\,ct$: 
\begin{equation*}
\lim_{x\uparrow -vt} p(x,t)=
\frac{br\mu(\mu t)^{r/A-1}{\rm e}^{-\mu t}}
{(b+r)(c+v)A\Gamma(r/A+1)}
\left[{_1}F_1\left(1,\frac{b+A+r}{A};\mu t\right)-1\right]
\end{equation*}
and
\begin{equation*}
\lim_{x\uparrow ct} p(x,t)=
\frac{br\lambda(\lambda t)^{b/A-1}{\rm e}^{-\lambda t}}
{(b+r)(c+v)A\Gamma(b/A+1)}
\left[{_1}F_1\left(1,\frac{b+A+r}{A};\lambda t\right)-1\right]\,.
\end{equation*}
\end{remark}
\par
Note that in this case a stationary distribution does not exist. Indeed, 
$$
 \lim_{t\to +\infty}\Prob\{S_t=ct\}=0,\qquad
 \lim_{t\to +\infty}\Prob\{S_t=-vt\}=0,\qquad
 \lim_{t\to +\infty} p(x,t)=0,
$$
since (see Equation 13.1.4 of \cite{Abram1994}), as $|z|\to +\infty$, 
$$
_{1}F_1(a,b;z)=\frac{\Gamma(b)}{\Gamma(a)}\, {\rm e}^z z^{a-b}[1+O(|z|^{-1})]
\qquad\hbox{for } {\rm Re}(z)>0\,.
$$ 
\par
Finally, the mean velocity subordinated to the positive initial
velocity can be expressed in the following form.
\begin{proposition}\label{th:mediaV-gamma}
Under the assumptions of Theorem \ref{polya-absol}, for all $t>0$ we have
\begin{equation}
\E [V_{t}\,|\,V_{0}=c]=c\,\overline{F}_{U_{1}}(t)+ 
c\pi_A\, \sum_{k = 1}^{+\infty}\phi_k(t\,|\,c)
+\,(-v)(1-\pi_A)\,\sum_{k = 1}^{+\infty}\psi_k(t\,|\,c)
\label{mediaV-gamma}
\end{equation}
with
\begin{equation}\label{phi-gamma}
\begin{split}
\phi_k(t\,|\,c)&=\sum_{j = 0}^{k-1}\Prob\{N_{k-1}=j\,|\,Z_0=c\}
\\
&\times \left[G(r/A+k-j-1,\mu, b/A+j+1,\lambda;t)-
G(r/A+k-j-1, \mu, b/A+j+2,\lambda;t)\right]\,,\\
\psi_k(t\,|\,c)&=\sum_{j = 0}^{k-1}\Prob\{N_{k-1}=j\,|\,Z_0=c\}
\\
&\times \left[G(b/A+j+1,\lambda, r/A+k-j-1,\mu;t)-
G(b/A+j+1, \lambda, r/A+k-j,\mu;t)\right]\,,
\end{split}
\end{equation}
where $\Prob\{N_{k-1}=j\,|\,Z_0=c\}$ is given by (\ref{formula-polya1})
and the function $G$ is defined in Appendix 
\ref{formulas}, by Eq.\ (\ref{funcG}).
\end{proposition}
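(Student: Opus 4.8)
The plan is to specialize Theorem~\ref{th-mediaV} to the Gamma/exponential setting of this section. Equation~(\ref{mediaV-gamma}) has exactly the shape of~(\ref{mediaV}): the constant $\pi_A=(b+A)/(b+A+r)$ --- the conditional probability that a generic trial is forward given that the first one was forward, which is the same in the Bernoulli and P\'olya schemes --- is unchanged, and the leading term $c\,\overline{F}_{U_1}(t)$ is also the same, now with $\overline{F}_{U_1}$ equal to the tail~(\ref{surv-polya-gamma-U}) evaluated at $k=1$. Hence everything reduces to computing $\phi_k(t\,|\,c)$ and $\psi_k(t\,|\,c)$ of~(\ref{phi-gen}) under assumptions~(\ref{distr2}).

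For this I would not use~(\ref{phi-gen}) directly but rather the equivalent expressions of Remark~\ref{remarkFT}: each of $\phi_k$ and $\psi_k$ is a finite sum over $j=0,\dots,k-1$, with weights $\Prob\{N_{k-1}=j\,|\,Z_0=c\}$ --- supplied in the P\'olya case by~(\ref{formula-polya1}) --- of integrals of the type $\int_0^t F_D^{(k-j-1)}(t-y)\,[f_U^{(j+1)}(y)-f_{U^{(j+1)}+U_{k+1}}(y)]\,{\rm d}y$, and symmetrically for $\psi_k$ with the roles of $U$ and $D$ exchanged. Then I would substitute the explicit laws: by~(\ref{distr2}) one has $U^{(j+1)}\sim\Gamma(b/A+j+1,\lambda)$ and, for $k-j-1\ge1$, $D^{(k-j-1)}\sim\Gamma(r/A+k-j-1,\mu)$, while $U_{k+1}\sim\Gamma(1,\lambda)$ and $D_{k+1}\sim\Gamma(1,\mu)$; since the convolution of independent Gamma laws with a common rate is again Gamma, $U^{(j+1)}+U_{k+1}\sim\Gamma(b/A+j+2,\lambda)$ and $D^{(k-j-1)}+D_{k+1}\sim\Gamma(r/A+k-j,\mu)$. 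Each such integral is then of the form $\int_0^t F_{\Gamma(\gamma_1,\lambda_1)}(t-y)\,f_{\Gamma(\gamma_2,\lambda_2)}(y)\,{\rm d}y=\Prob\{\Gamma(\gamma_1,\lambda_1)+\Gamma(\gamma_2,\lambda_2)\le t\}$, which is precisely the function $G(\gamma_1,\lambda_1,\gamma_2,\lambda_2;t)$ defined in Appendix~\ref{formulas} by~(\ref{funcG}), available in closed form via ${}_1F_1$. Matching the shape and rate parameters of the four partial sums to the arguments of $G$ and substituting into~(\ref{phi-gen}) yields the identities~(\ref{phi-gamma}), and inserting these back into~(\ref{mediaV}) gives~(\ref{mediaV-gamma}).

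The parameter bookkeeping above is routine, and no new convergence question arises: the series already sums to the bounded quantity $\E[V_t\,|\,V_0=c]$ by Theorem~\ref{th-mediaV}. The one point I would handle with care is the boundary index $j=k-1$, where $D^{(k-j-1)}=D^{(0)}$ is the empty partial sum, so $F_D^{(0)}\equiv1$ and $f_D^{(0)}$ is the unit mass at the origin; there the two integrals collapse to $F_U^{(k)}(t)-F_{U^{(k)}+U_{k+1}}(t)$ and $F_U^{(k)}(t)-F_{U^{(k)}+D_{k+1}}(t)$, respectively, and one must check that these agree with the $j=k-1$ summands of~(\ref{phi-gamma}) under the convention used for $G$ in the Appendix --- or, equivalently, split off this purely forward contribution (the event $N_{k-1}=k-1$) and treat it separately, as was done for the discrete component in Proposition~\ref{polya-discr} and for $b(x,t\,|\,c)$ in Theorem~\ref{polya-absol}.
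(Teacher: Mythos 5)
Your proposal follows essentially the same route as the paper's own (very brief) proof: specialize Proposition \ref{th-mediaV} via Remark \ref{remarkFT}, insert the Gamma laws of $U^{(j+1)}$, $D^{(k-j-1)}$, $U_{k+1}$, $D_{k+1}$ coming from (\ref{distr2}), and recognize each resulting convolution integral as the function $G$ of (\ref{funcG}). Your extra caution about the boundary index $j=k-1$ (where $D^{(k-j-1)}$ is the empty sum, so the summand reduces to $F_U^{(k)}(t)-F_{U^{(k)}+U_{k+1}}(t)$, respectively $F_U^{(k)}(t)-F_{U^{(k)}+D_{k+1}}(t)$, rather than a genuine two-Gamma convolution of the form $G(r/A,\mu,\cdot,\cdot;t)$) is well placed, since the paper's proof passes over this term in silence and the stated formula only matches it under a special convention for that summand.
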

\begin{proof} 
Eqs.\ (\ref{mediaV-gamma}) and (\ref{phi-gamma}) 
easily follow from Proposition \ref{th-mediaV}, Remark \ref{remarkFT}
and Eq.\ (\ref{funcG}) since, for $k\geq 1$, $U_{k+1}$ and $D_{k+1}$ are
exponential with parameter $\lambda$ and $\mu$, respectively, and
$U^{(k)}$ and $D^{(k)}$ are Gamma-distributed with parameters
$(b/A+k,\lambda)$ and $(r/A+k,\mu)$ respectively.
\end{proof}
%
%%%%%%%%%%%%%%%%%%%%%%%%%%%%%%
\subsection{A suitable extension}\label{sect:reinforced}
%%%%%%%%%%%%%%%%%%%%%%%%%%%%%%
In this section we propose a suitable extension  
of  the model based on  P\'olya scheme  
by replacing the classical P\'olya urn with the following 
randomly reinforced urn (cf.\ Aletti {\em et al.} \cite{AlMaSe09}, 
Berti {\em et al.} \cite{BeCrPrRi11}, \cite{BePrRi04}, Crimaldi \cite{Cr09}).  
An urn contains $b>0$ black balls and $r>0$
red balls. At each epoch $n\geq 1$, a ball is drawn and then replaced
together with a random number of balls of the same color. Say that
$B_n\geq 0$ black balls or $R_n\geq 0$ red balls are added to the urn
according to whether $X_n=1$ or $X_n=0$, where $X_n$ is the indicator
function of the event $\{$black ball at time $n\}$.  We assume, for
each $n\geq 1$, 
\begin{equation*}
(B_n,R_n)\mbox{ independent of }\,
\bigl(X_1,B_1,R_1,X_2,\ldots,B_{n-1},R_{n-1},X_n\bigr).
\end{equation*}
According to this new model, formula (\ref{eq:Vn}) must be replaced by
\begin{equation*} 
\begin{split}
\Prob\{Z_{n}=c\,|\,{\cal G}_n\}&=\Prob\{X_{n+1}=1\,|\,{\cal G}_n\}=
\frac{b+\sum_{k=1}^nB_kX_k}{b+r+\sum_{k=1}^n\bigl(B_kX_k+R_k(1-X_k)\bigr)},
\\
\Prob\{Z_{n}=-v\,|\,{\cal G}_n\}&=\Prob\{X_{n+1}=0\,|\,{\cal G}_n\}=
\frac{r+\sum_{k=1}^n R_k(1-X_k)}{b+r+\sum_{k=1}^n\bigl(B_kX_k+R_k(1-X_k)\bigr)}, 
 \end{split}
\end{equation*}
where
\begin{equation*}
\mathcal{G}_0=\{\emptyset,\Omega\},
\qquad
\mathcal{G}_n=\sigma\bigl(X_1,B_1,R_1,\ldots,X_n,B_n,R_n\bigr).
\end{equation*}
It has been proven that, if $B_n=R_n$ for each $n$ (cf.\ Berti et
al.\ \cite{BePrRi04} and Crimaldi \cite{Cr09}) or, more generally,
if\footnote{The notation $\stackrel{d}=$ denotes equality in distribution} 
$B_n\stackrel{d}=R_n$ for each $n$ (cf.\ Berti et al.\ \cite{BeCrPrRi11}, p.\ 538), 
then $(X_n)_{n\geq 1}$ is a sequence
of conditionally identically distributed random variables (which can be 
not exchangeable) and so we have, for $n\geq 2$, 
$$
\Prob\{X_{n}=1\,|\,X_1=1\}=\E\left[\frac{b+B_1}{b+B_1+r}\right],
\qquad 
\Prob\{X_{n}=0\,|\,X_1=1\}=\E\left[\frac{r}{b+B_1+r}\right]\,. 
$$
Using the above equalities, instead of (\ref{legge-X}), we can obtain 
a formula for the mean velocity similar to (\ref{mediaV}). 
However, in this case the computation of the conditional probability 
distribution of $N_{k-1}$ needed in Eq.\ (\ref{FTk|Z0}) is not an easy task, 
and possibly will be the object of future investigations. 
%%%%%%%%%%%%%%%%%%%%%%%%%
\appendix
%%%%%%%%%%%%%%%%%%%%%%%%%
\section{The conditional probability distribution of $N_{k-1}$}\label{appendix-N}
%%%%%%%%%%%%%%%%%%%%%%%%%
%===========================
\subsection{Bernoulli scheme $(A=0)$}
%===========================
Using the independence of $\{X_n; n\geq 1\}$ and setting $p$ as in
(\ref{eq:def_p}), we have for $k\geq 1$ and $0\leq j\leq k-1$
$$
\Prob\{N_{k-1}=j\,|\,Z_0=c\}=
\Prob\left\{\sum_{h=2}^k X_h=j\,|\,X_1=1\right\}
={k-1 \choose j}p^{j}(1-p)^{k-1-j}
$$
and so
\begin{equation}
\Prob\{N_{k-1}=j,\,Z_k=c\,|\,Z_0=c\}={k-1 \choose j}p^{j+1}(1-p)^{k-1-j}\,,
\label{formula-bern2}
\end{equation}
which simply becomes $\Prob\{Z_1=c\,|\,Z_0=c\}=p$ for $k=1$ and $j=0$. 
Similarly, we have
\begin{equation}
\Prob\{N_{k-1}=j,\,Z_k=-v\,|\,Z_0=c\}={k-1 \choose j}p^{j}(1-p)^{k-j}\,.
\label{formula-bern3}
\end{equation}
Conditioning on $Z_0=-v$, we get
\begin{equation}
\Prob\{N_{k-1}=k-1-j,\,Z_k=c\,|\,Z_0=-v\}
={k-1 \choose j}p^{k-j}(1-p)^j
\label{formula-bern2-neg}
\end{equation}
and
\begin{equation}
 \Prob\{N_{k-1}=k-1-j,\,Z_k=-v\,|\,Z_0=-v\}
 ={k-1 \choose j} p^{k-1-j}(1-p)^{j+1}\,.
 \label{formula-bern3-neg}
\end{equation}
%
%===========================
\subsection{P\'olya scheme $(A>0)$}
%===========================
Using the exchangeability of $\{X_n; n\geq 1\}$, we have for $k\geq 1$
and $0\leq j\leq k-1$
\begin{equation}
\begin{split}
\Prob\{N_{k-1}=j\,|\,Z_0=c\}&=
%\Prob\{\sum_{h=2}^k X_h=j\,|\,X_1=1\} \\
%&=
{k-1 \choose j}
\frac
{\Gamma\left(\frac{b+A+r}{A}\right)}
{\Gamma\left(\frac{b+A}{A}\right)\Gamma\left(\frac{r}{A}\right)}
\,
\frac
{\Gamma\left(j+\frac{b+A}{A}\right)\Gamma\left(k-1-j+\frac{r}{A}\right)}
{\Gamma\left(k-1+\frac{b+A+r}{A}\right)}\,, \\
&={k-1 \choose j}
 \left(\frac{b+A}{A}\right)_j \left(\frac{r}{A}\right)_{k-j-1} 
 \left[\left(\frac{b+A+r}{A}\right)_{k-1}\right]^{-1},
\end{split}
\label{formula-polya1}
\end{equation}
where $(\alpha)_0=1$ and $(\alpha)_j=\alpha(\alpha+1)\cdot\ldots\cdot
(\alpha+j-1)=\Gamma(\alpha+j)/\Gamma(\alpha)$ (the ascending factorial).  
For $k\geq 2$, the above formula corresponds to the quantity
$\Prob\{\sum_{h=2}^k X_h=j\,|\,X_1=1\}$, i.e.\ to the probability of
obtaining $j$ black balls in $k-1$ drawings in a P\'olya urn scheme
starting from $r$ red balls and $b+A$ black balls. Hence we get
\begin{equation}
\begin{split}
\Prob\{N_{k-1}=j,\,Z_k=c\,|\,Z_0=c\}&=
\Prob\{X_{k+1}=1\,|\,N_{k-1}=j,\,X_1=1\}\cdot
\Prob\{N_{k-1}=j\,|\,Z_0=c\}\\
&=\frac{b+A(j+1)}{b+r+Ak}\Prob\{N_{k-1}=j\,|\,Z_0=c\}\\
&={k-1 \choose j}
\frac
{\Gamma\left(\frac{b+A+r}{A}\right)\Gamma\left(j+1+\frac{b+A}{A}\right)
\Gamma\left(k-1-j+\frac{r}{A}\right)}
{\Gamma\left(\frac{b+A}{A}\right)\Gamma\left(\frac{r}{A}\right)
\Gamma\left(k+\frac{b+A+r}{A}\right)}
\\
&={k-1 \choose j}\left(\frac{b+A}{A}\right)_{j+1}
\left(\frac{r}{A}\right)_{k-j-1}
\left[\left(\frac{b+A+r}{A}\right)_{k}\right]^{-1}\,,
\end{split}
\label{formula-polya2}
\end{equation}
which simply becomes $\Prob\{Z_1=c\,|\,Z_0=c\}=\frac{b+A}{b+r+A}$ 
for $k=1$ and  $j=0$. Similarly, we have
\begin{equation}
\begin{split}
\Prob\{N_{k-1}=j,\,Z_k=-v\,|\,Z_0=c\}&=
\Prob\{X_{k+1}=0\,|\,N_{k-1}=j,\,X_1=1\}\cdot
\Prob\{N_{k-1}=j\,|\,Z_0=c\}\\
&=\frac{r+A(k-j-1)}{b+r+Ak}\Prob\{N_{k-1}=j\,|\,Z_0=c\}
\\
&=
{k-1 \choose j}
\frac
{\Gamma\left(\frac{b+A+r}{A}\right)\Gamma\left(j+\frac{b+A}{A}\right)
\Gamma\left(k-j+\frac{r}{A}\right)}
{\Gamma\left(\frac{b+A}{A}\right)\Gamma\left(\frac{r}{A}\right)
\Gamma\left(k+\frac{b+A+r}{A}\right)}\\
&={k-1 \choose j}\left(\frac{b+A}{A}\right)_{j}
\left(\frac{r}{A}\right)_{k-j}
\left[\left(\frac{b+A+r}{A}\right)_{k}\right]^{-1}\,.
\end{split}
\label{formula-polya3}
\end{equation}
Conditioning on $Z_0=-v$, we get
\begin{equation}
\Prob\{N_{k-1}=k-1-j,\,Z_k=c\,|\,Z_0=-v\}
={k-1 \choose j}\left(\frac{r+A}{A}\right)_{\! j}
\left(\frac{b}{A}\right)_{\! k-j}
\bigg[\left(\frac{b+A+r}{A}\right)_{\! k}\bigg]^{-1}
\label{formula-polya2-neg}
\end{equation}
and
\begin{equation}
\Prob\{N_{k-1}=k-1-j,\,Z_k=-v\,|\,Z_0=-v\}
={k-1 \choose j}\left(\frac{r+A}{A}\right)_{\! j+1}
\left(\frac{b}{A}\right)_{\! k-j-1}
\bigg[\left(\frac{b+A+r}{A}\right)_{\! k}\bigg]^{-1}\,.
\label{formula-polya3-neg}
\end{equation}
%
%%%%%%%%%%%%%%%%%%%%%%%
\section{Auxiliary functions}\label{formulas}
%%%%%%%%%%%%%%%%%%%%%%%
For the reader's convenience, now we provide some formulas used in  the proofs.  
\par
The confluent hypergeometric function (or Kummer's function) used
throughout the paper is so defined (see \cite{Abram1994} for details)
\begin{equation}\label{hypergeom-func}
%\begin{split}
_{1}F_{1}(u,v;z)=\sum_{k=0}^{\infty}\frac{(u)_k}{(v)_k}\frac{z^k}{k!}\,.
%\quad\mbox{(confluent hypergeom. func. or Kummer's func.),}
%\\
%_{2}F_{1}(u,v,w;z)&=\sum_{k=0}^{\infty}
% \frac{(u)_k(v)_k}{(w)_k}\frac{z^k}{k!}\quad
%\mbox{(Gauss hypergeom. func.).}
%\end{split}
\end{equation}
We start recalling the relation
\begin{equation}\label{hypergeom-eq}
_{1}F_1(a,b;z)={\rm e}^z\, {_{1}}F_1(b-a,b;-z)\quad \hbox{or, equivalently,}\quad 
_{1}F_1(a,b;-z)={\rm e}^{-z}\, {_{1}}F_1(b-a,b;z)\,. 
\end{equation}
An useful integration formula is Equation 3.383.1 of \cite{GrRy2007}: 
\begin{equation}\label{integral-1}
\int_0^t y^{\nu-1}(t-y)^{\mu-1} {\rm e}^{\beta y}{\rm d}y=
\frac{\Gamma(\mu)\Gamma(\nu)}{\Gamma(\mu+\nu)} t^{\mu+\nu-1} 
{_{1}}F_1(\nu,\mu+\nu;\beta t)
\end{equation}
when ${\rm Re}(\mu)>0$ and ${\rm Re}(\nu)>0$.
\par 
Let $X$ and $Y$ be independent gamma-distributed random variables 
with parameters $(\alpha,\mu)$ and $(\beta,\lambda)$, respectively.  
By (\ref{hypergeom-eq}) and the integration formula (\ref{integral-1}), we find
\begin{equation}\label{cum-distr-func-gamma}
F_X(t)=\frac{(\mu t)^{\alpha}}{\Gamma(\alpha+1)}
\,{\rm e}^{-\mu t}\,
{_{1}}F_{1}(1,\alpha+1;\mu t)\,,\qquad\mu>0,\, t\geq 0
\end{equation}
and so, for $\alpha,\,\beta,\,\mu,\,\lambda>0$ and $t\geq 0$, 
\begin{equation}\label{funcG}
\begin{split}
G(\alpha,\mu,\beta,\lambda;t):= \,& \Prob(X+Y\leq t)=\int_0^t F_X(t-y)f_Y(y){\rm d}y
\\
=\,&
\mu^\alpha\lambda^\beta t^{\alpha+\beta} {\rm e}^{-\mu t}
\sum_{h=0}^{+\infty} \frac{(\mu t)^h}{\Gamma(\alpha+\beta+h+1)}\,
  {_{1}}F_{1}(\beta,\alpha+\beta+h+1;(\mu-\lambda)t)\,.
\end{split}
\end{equation}
Note that an alternative expression of $G$ was given in Moschopoulos \cite{Mo1985} 
as series of suitable integrals. 
Analogously  to (\ref{funcG}), by (\ref{integral-1}) we can express the function 
\begin{equation}
 H\big(\alpha,\beta;t\big)
  := \int_0^t (t-y)\,_{1}F_1\big(1,2;\alpha(t-y)\big) 
{\rm e}^{-\alpha(t-y)}{\rm e}^{-\beta y}\,{\rm d}y\,,
 \qquad \alpha,\beta\in\mathbb{R},\;\; t\geq 0 
 \label{funcH}
\end{equation}
in terms of the hypergeometric function as
$$
 H\big(\alpha,\beta;t\big)=\left\{
 \begin{array}{ll}
 \displaystyle\frac{t}{\alpha} 
\left[{}_1F_1(1,2;-\beta t)-{\rm e}^{-\alpha t} {}_1F_1(1,2;(\alpha-\beta) t)\right], 
 & 
 \alpha\neq 0,\; \beta\in{\mathbb R}
 \\[3mm]
 \displaystyle\frac{t}{\beta}\left[ {\rm e}^{-\beta t} -\, {}_1F_1(1,2;-\beta t)\right],
 &
 \alpha=0, \;\beta\neq 0
 \\[3mm]
  \displaystyle\frac{t^2}{2}\,, 
   &
 \alpha=0, \;\beta= 0\,,
 \end{array} 
 \right.
$$ 
where  we recall that ${}_1F_1(1,2;z)$ reduces to $1$ when $z=0$ and to $({\rm e}^z-1)/z$
when $z\neq 0$.
%--------------------------------------------------------------------
\subsection*{Acknowledgements.}
%--------------------------------------------------------------------
The authors thank an anonymous referee for useful remarks that improved the paper.
\par
This work was partially supported by MIUR (PRIN 2008) -- within the Projects 
``Mathematical models and computation methods for information processing 
and transmission in neuronal systems subject to stochastic dynamics'' and 
``Bayesian methods: theoretical advancements and new applications" -- and  
by CNR PNR project ``CRISIS Lab''. 
%--------------------------------------------------------------------

\end{document}